\DeclareFontFamily{OT1}{pzc}{}
\DeclareFontShape{OT1}{pzc}{m}{it}{<-> s * [1.10] pzcmi7t}{}
\DeclareMathAlphabet{\mathpzc}{OT1}{pzc}{m}{it}
\DeclareMathAlphabet{\mathcalligra}{T1}{calligra}{m}{n}
\let\baraccent=\= % rename builtin command \= to \baraccent
\renewcommand{\=}[1]{\stackrel{#1}{=}} % for putting numbers above =
\newcommand{\norm}[1]{\left\| #1 \right\|}
\newcommand{\inv}{^{-1}}
\newcommand{\red}[1]{\textcolor{black}{#1}}
\numberwithin{equation}{section}
\definecolor{refkey}{rgb}{0.9451,0.2706,0.4941}
\definecolor{labelkey}{rgb}{0.9451,0.2706,0.4941}
\theoremstyle{plain}
\newtheorem{theorem}{Theorem}[section]
\newtheorem*{theorem*}{Theorem}
\newtheorem{corollary}[theorem]{Corollary}
\newtheorem{lemma}[theorem]{Lemma}
\newtheorem{proposition}[theorem]{Proposition}
\theoremstyle{definition}
\newtheoremstyle{myremstyle}
  {\topsep} % Space above
  {1.4\topsep} % Space below
  {} % Body font
  {} % Indent amount
  {\itshape} % Theorem head font {\bfseries}
  {.} % Punctuation after theorem head
  {.5em} % Space after theorem head
  {} % Theorem head spec (can be left empty, meaning `normal')
\theoremstyle{myremstyle} \newtheorem{remark}{Remark}
\newcommand*{\from}{\colon}
\newcommand*{\bigmid}{\:\big|\:}
\newcommand*{\Biggmid}{\:\Bigg|\:}
\newcommand{\indic}{\mathbb{I}}
\newcommand{\bbR}{{\mathbb{R}}}
\newcommand{\bbN}{{\mathbb{N}}}
\newcommand{\calK}{{\mathscr{K}}}
\newcommand{\calS}{{\mathscr{S}}}
\newcommand{\e}{\mathrm{e}}
\newcommand{\bbE}{\mathbb{E}}
\newcommand{\dr}{\ensuremath{\,{\rm d}r}}
\newcommand{\ds}{\ensuremath{\,{\rm d}s}}
\newcommand{\innprod}[2]{\left\langle #1, #2 \right\rangle}
\title{Discrete maximal regularity of an implicit Euler--Maruyama scheme with non-uniform time discretisation for a class of stochastic partial differential equations}
\theoremstyle{myremstyle} \newtheorem{assumption}{Assumption}
\newcommand{\what}{\widehat}
\newcommand{\TMAX}{1}
\newcommand{\dstoconv}[1]{\big[R^J \diamond #1 \big]^L}
\newcommand{\dsconvjk}[1]{\big[R^J \diamond #1 \big]^L_{\red{j}}}
\newcommand{\sumlall}{\red{\sum_{\ell=1}^{\infty}}}
\newcommand{\sumltrunc}{\red{\sum_{\ell=1}^{L}}}
\newcommand{\sumjall}{\red{\sum_{j=1}^{\infty}}}% \sum_{k\in \Lambda_{j}}}
\newcommand{\sumjtrunc}{\red{\sum_{j=1}^{J}}}
\newcommand{\HS}{\mathcal{L}_2}
\newcommand{\dW}{\mathrm{d}W}
\newcommand{\Addresses}{{% additional braces for segregating \footnotesize
	\bigskip
  \footnotesize
  \textsc{School of Mathematics and Statistics}\par\nopagebreak
  \textsc{University of New South Wales}\par\nopagebreak
  \textsc{Sydney NSW 2052, Australia}\vspace{1pt}\par\nopagebreak
  \textit{E-mail}: \texttt{\href{mailto:y.kazashi@unsw.edu.au}{y.kazashi@unsw.edu.au}}
}}
\author{Yoshihito Kazashi}
\begin{document}
\maketitle
\begin{abstract}
An implicit Euler--Maruyama method with non-uniform step-size applied to a class of stochastic partial differential equations is studied.
A spectral method is used for the spatial discretization and the truncation of the Wiener process. 
 A discrete analogue of maximal $L^2$-regularity of the scheme and the discretised stochastic convolution is established, which has the same form as their continuous counterpart. 
\end{abstract}
\section{Introduction}
Our interest in this paper lies in a discrete analogue of maximal regularity % and a discretised error analysis 
for a class of stochastic partial differential equations (SPDEs) of parabolic type. In more detail, with a positive self-adjoint generator $-A$ with compact inverse densely defined on a separable Hilbert space $H$, we consider the equation
\begin{align}\label{eq:main}
\left\{
\begin{array}{rl}
\mathrm{d}X(t) &= {A} X(t) \mathrm{d}t + B(t,X(t)) \mathrm{d}W(t),\qquad\text{ for }\ t \in (0,1] \\
X(0) &= \xi,
\end{array}\right.
\end{align}
where the mild solution $X$ takes values in $H$. The assumption on $B$ and the $Q$-Wiener process $W$ will be discussed later. 
{The aim of this paper is to show a property of a prototypical discretisation to simulate the solution of such equations:   
	we show a discrete analogue of an estimate called maximal regularity (Corollary \ref{cor:max estim}).}

Maximal regularity is a fundamental concept in the theory of deterministic partial differential equations (see, for example \cite{Amann.H_1995_book_vol1,Kunstmann.P_Weis_2004_lecture_note,Lunardi.A_1995_book} and references therein). Similarly, in the study of stochastic partial differential equations, the maximal regularity is an important analysis tool \cite{DaPrato.D_2014_book,DaPrato.G_1982_reg_convo} as well as an active research area  \cite{vanNeerven.J_2012_stomaximal,vanNeerven.J_etal_2012_sto_meaximal_evol,Auscher.P_etal_2014_conical,vanNeerven.J_2015_Rbdd}. In our setting, the above equation~\eqref{eq:main} can be shown to satisfy the maximal regularity estimate of the form
\begin{align}
\int_0^1 \bbE
[
\|X(s)\|_{D(A^{\iota+\frac12})}^2
]\ds 
\le 
\|\xi\|_{D(A^{\iota+\frac12})}^2
+ \int_0^1 \bbE \big\|B(r,X(r))\big\|_{\HS(H_0,D(A^{\iota}))}^2
\dr,\label{eq:maximal reg}
\end{align}
where $\iota\ge 0$ is a suitable parameter depending on the operator $B$,  $D(A^{\iota+\frac12})$ is the domain of the fractional power $A^{\iota+\frac12}$ of $A$ in $H$, and $\HS(H_0,D(A^{\iota}))$ is the space of Hilbert--Schmidt operator from $H_0$, the Cameron--Martin space associated with $Q$, to $D(A^{\iota})$. More details will be discussed later.
% In words, given a suitable smoothness of the initial data, the solution is ``one-half spatially smoother'', than the range of the diffusion operator $B(t,x)$. This estimate is optimal, in that the solution cannot be spatially smoother in general (see \cite[Example 5.3]{Kruse.R_Larsson_2012_optimal_reg}). To put it another way, as described in \cite[Chapter 6]{DaPrato.D_2014_book}, the regularity one can obtain in the stochastic setting is the half of the corresponding regularity in the deterministic setting. 

In recent years, the study of discrete analogues of the maximal regularity has been attracting attention for deterministic partial differential equations \cite{Agarwal.R_etal_2014,Ashyralyev.A_1994_book,Blunck.S_2001_maximal,Kemmochi.T_2016_disc_maximal,Kemmochi.T_Saito_2016_arXiv,Kovacs.B_etal_2015_arxiv,Leykekhman.D_etal_2015_arxiv}; {to the best of the author's knowledge, corresponding properties of numerical methods for stochastic PDEs have not been addressed in the literature.}

{Maximal regularity of stochastic and deterministic equations are different in nature. As we will see in \eqref{eq:maximal reg}, given a suitable smoothness of the initial data, the solution is ``one-half spatially smoother'' than the range of the diffusion operator $B(t,x)$. This estimate \red{is} optimal, in that the solution cannot be spatially smoother in general (see \cite[Example 5.3]{Kruse.R_Larsson_2012_optimal_reg}). To put it another way, as described in \cite[Chapter 6]{DaPrato.D_2014_book}, the regularity one can obtain is the half of the corresponding regularity for the deterministic case.}

{We focus on the case where the operator $A$ and the covariance operator $Q$ share the same eigensystems. This prototypical setting is partly motivated by applications in environmental modelling and astrophysics, where  
	covariance operators---of the random fields \cite{Baldi.P_etal_2007_homogeneous,Marinucci.D_2011_book}, and of the Wiener process for the stochastic heat equations \cite{Lang.A_Schwab_2015,Anh.V.V_etal_2018_fractional}, for example---the eigenspaces of which are the same as those of the Laplace operators play important roles. 
	In simulations, it is desirable that discretisations users employ inherit properties of the solution of the model considered. Our results show the method we consider preserves a spatial regularity---maximal regularity---in a suitable sense.  
} 

{As a spatial discretisation we consider the spectral-Galerkin method. 
	The Wiener process, which is assumed to admit a series representation, takes its value in an infinite-dimensional space. In practice, we can simulate only finitely many of them. We approximate the Wiener process by truncation, i.e., we use a type of truncated Karhunen--Lo\`{e}ve approximation.}

{Temporally, we consider the implicit Euler--Maruyama method with a non-uniform time discretisation. The aforementioned approximation of the Wiener process introduces one-dimensional Wiener processes multiplied by varying scalars---the eigenvalues of the covariance operator.  
	Motivated by this observation, we allow the discretisation of each Wiener process to depend on these scalars. 
	The algorithm we consider is first proposed by M\"{u}ller-Gronbach and Ritter~\cite{Muller-Gronbach.T_Ritter_2007_FoCM_lower_bd,Muller-Gronbach.T_Ritter_2007_BIT_multiplicative}, for the stochastic heat equation on the unit cube. In \cite{Muller-Gronbach.T_Ritter_2007_FoCM_lower_bd,Muller-Gronbach.T_Ritter_2007_BIT_multiplicative}, the resulting non-uniform scheme was shown to achieve an asymptotic optimality under a suitable step size, which in general cannot be achieved by schemes with uniform step-size. }

{
	The results we establish show that the non-uniform discretisation still preserves maximal regularity in a suitable sense. 
	The algorithm we consider includes the implicit Euler--Maruyama method with the uniform time discretisation as a special case---the case where one uses the same step size for all one-dimensional Wiener processes---even though we, in general, lose the aforementioned optimality. As a consequence, we obtain a discrete analogue of maximal regularity for the standard implicit Euler--Maruyama method: the discretisation with the uniform step size.}

The structure of this paper is as follows. 
Section~\ref{maximal-sec:setting} recalls some definitions and basic results needed in this paper. Section~\ref{maximal-sec:discrete} introduces the discretised scheme we consider. Then, in Section~\ref{maximal-sec:disc reg estim} we show a discrete maximal regularity. 
%We provide a discrete error estimate in Section~\ref{maximal-sec:err estim}, and 
Then, we conclude this paper in Section~\ref{maximal-sec:conclusion}. 
\section{Setting}\label{maximal-sec:setting}
%--------------------------------------------------------------------------------------
By $H$ we denote a separable $\bbR$-Hilbert space $(H,\langle\cdot,\cdot\rangle,\|\cdot\|)$. Let $-A:D(A)\subset H\to H$ be a self-adjoint, positive definite linear operator that is densely defined on $H$, with compact inverse. Then, $A$ is the generator of the $C_0$-semigroup $(S(t))_{t \ge 0}:=(\e^{At})_{t\ge 0}$ acting on $H$ that is analytic. Further, there exists a complete orthonormal system $\{h_{{j}}\}$ for $H$ such that $-A h_{{j}} = \lambda_{j} h_{{j}}$, each eigenspace is of finite dimensional, and 
\[
0< \lambda_{1}< \lambda_{2}<\dotsb< \lambda_{j}<\dotsb,
\]
and $\lambda_{j}\to\infty$ as $j\to\infty$ unless the compact inverse $-A^{-1}$ is finite rank.
{For simplicity, we assume the dimension of each eigenspace is $1$.} %Let $\spn\{h_{{j}}\mid j\in \Lambda_{j }\}$ denote the $j$-th eigenspace with an index set $\Lambda_j$ of a finite cardinality. 
Then, we have the spectral representation 
\[
S(t) x = \sumjall \e^{-\lambda_{j} t}\innprod{x}{h_{{j}}}h_{{j}}\in H.
\]

For $r\in\bbR$, let us define the domain $D(A^r)$ of the fractional power $A^r$ of $A$
%\begin{align}
%H^{r} := \Bigg\{ x \in H \Biggmid \|x\|^2_{H^{r}} =\sumjall (1+\lambda_{j}^{2{r}})
%\innprod{x}{h_{{j}}}^2 < \infty \Bigg\}.
%\end{align}
%and 
by
\begin{align*}
D(A^{r}) := \Bigg\{ x \in H \Biggmid \|x\|^2_{D(A^{r})} =\sumjall \lambda_{j}^{2{r}}
\innprod{x}{h_{{j}}}^2 < \infty \Bigg\}.
\end{align*}
{We obtain a separable Hilbert space $(D(A^{r}),\langle\cdot,\cdot\rangle_{D(A^{r})},\|\cdot\|_{D(A^{r})})$ by setting $\langle\cdot,\cdot\rangle_{D(A^{r})}:=\langle A^r\cdot,A^r\cdot\rangle$.} 

{For more details for the set up above}, see for example  \cite{Kato.T_book_1995_reprint,Lunardi.A_1995_book,Sell.G_You_2013_book,Yosida.K_book_1995reprint}.

%By $W\from[0,1]\times\Omega\to H$ we denote either the $Q$-Wiener process with a covariance operator $Q$ of the trace class, or (\blue{hopefully}) the cylindrical Wiener process with the identity $Q=\mathrm{id}_{H\to H}$. In the sequel, these cases are called (TC) and (ID), respectively.
{Let $(\Omega,\mathscr{F},\mathbb{P})$ be a probability space equipped with a filtration satisfying the usual conditions.} 
By $W\from[0,1]\times\Omega\to H$ we denote the $Q$-Wiener process with a covariance operator $Q$ of the trace class.  We assume that the Wiener process $W$ is adapted to the filtration. %, or (\blue{hopefully}) the cylindrical Wiener process with the identity $Q=\mathrm{id}_{H\to H}$. In the sequel, these cases are called (TC) and (ID), respectively.
%For (ID), $d=1$ is assumed to guarantee existence of the mild solution in $H$. (\blue{?. See Muller--Gronbach FoMC p. 137}). For (TC) we assume that the (normalised?) eigenfunctions $h_{{\ell}}$ of $A$ is also eigenfunctions of $Q$ with
Further, we assume that the eigenfunctions $h_{{\ell}}$ of $A$ is also eigenfunctions of $Q$ with
\begin{align*}
Q h_{{\ell}}=  q_{\ell} h_{{\ell}},
\end{align*}
such that
$
\mathrm{Tr}(Q)
=\sumlall \innprod{Q h_{{\ell}}}{h_{{\ell}}}
=\sumlall  q_{\ell}<\infty
$. 
It is well-known that $W$ taking values in $H$ can be characterised as 
\[W(t)
=
\sumlall
\sqrt{ q_{\ell}}
\beta_{{\ell}}(t)
h_{{\ell}}\quad\text{
	a.s.},\]
where $\beta_{{\ell}}$ are independent one-dimensional standard Brownian motions with the zero initial condition realised on {$(\Omega,\mathscr{F},\mathbb{P})$}  that are adapted to the underlying filtration{, and that the series converges in the Bochner space $L^2(\Omega;C([0,1];H))$}.  
The $Q$-Wiener process takes values in $H$ by construction. Here, since $A$ and $Q$ are assumed to share the same eigenfunctions, we can provide finer characterisations of the regularity.
%\begin{remark}
\remark{
	Let $r\ge0$ and $t\in(0,1]$. Then, 
	$
	\sumlall
	\lambda_{\ell}^{2r}{ q_{\ell}}<\infty
	$
	if and only if 
	$
	W(t)
	\in D(A^{r})$,
	a.s. 
	Indeed, we have
	$
	\mathbb{E}[
	\norm{W(t)}_{D(A^{r})}^2
	]
	=t
	\sumlall \lambda_{\ell}^{2 r} q_{\ell}
	$.
}
%\end{remark}

We introduce the Hilbert space
$
H_0 = Q^{1/2}(H) 
$
equipped with the inner product
\[
\innprod{ h_1}{ h_2}_0 = \langle Q^{-1/2} h_1,Q^{-1/2} h_2\rangle \quad \mbox{ for } h_1, h_2 \in H,
\] 
where $Q^{-1/2}:=(Q^{1/2}|_{(\ker(Q^{1/2}))^\perp})\inv\from {H_0} \to (\ker(Q^{1/2}))^\perp$ is the pseudo-inverse of $Q^{1/2}$. 

In the following, $a \preceq b$ means that $a$ can be bounded by some constant times $b$ uniformly with respect to any parameters on which $a$ and $b$ may depend. %Further, $\alpha\sim \beta$ means that $\alpha \lesssim \beta$ and $\beta \lesssim \alpha$.
%In what follows, $L$ is the infinitesimal generator of a %strongly continuous semigroup
%$(S(t))_{t \ge 0}$ in $H$, $\xi \in H$, 
%The space $\HS$ is equipped with the $\sigma$-algebra that is generated by 
%all mappings of the form $A \mapsto \innprod{Au}{v}$ with $u\in U$ and $v \in V$. 
%In the case (TC) we assume 
Throughout this paper, we assume the following. 
\begin{assumption}\label{assump:cond on B}
	%For some $\iota\ge0$, we assume 
	%$B\from[0,1]\times {D(A^{\iota})}\to \HS(H_0, D(A^{\iota}))$ is $\mathcal{B}([0,1])\otimes\mathcal{B}({D(A^{\iota})})/\mathcal{B}(\HS(H_0, D(A^{\iota})))$-measurable, where for a given normed space $(\mathscr{X},\|\cdot\|_{\mathscr{X}})$ the Borel $\sigma$-algebra associated with the norm topology is denoted by $\mathcal{B}(\mathscr{X})$. 
	We assume $B\from[0,1]\times \red{H}\to \HS(H_0, \red{H})$ is $\mathcal{B}([0,1])\otimes\mathcal{B}(\red{H})/\mathcal{B}(\HS(H_0, \red{H}))$-measurable, where for a given normed space $(\mathscr{X},\|\cdot\|_{\mathscr{X}})$ the Borel $\sigma$-algebra associated with the norm topology is denoted by $\mathcal{B}(\mathscr{X})$. 
	Further, let $B$ satisfy
	\begin{equation}\label{eq:lip-H}
	\| B(t,u) - B(t,v)\|_{\HS(H_0, \red{H})}
	\preceq
	\|u-v\|,
	\quad 
	\text{ for $\ t\in [0,1]$, $u,v\in \red{H}$.}
	\end{equation}
	\red{Moreover, let $\iota\in[0,1/2]$ be given. We assume for any $t \in [0,1]$, $u \in {D(A^{\iota})}$ we have $B(t,u)\in \HS(H_0, D(A^{\iota}))$ and}
	\begin{equation}\label{eq:lin-DA}
	\|B(t,u)\|_{\HS(H_0, D(A^{\iota}))} %\le c
	\preceq
	1+ \|u\|_{ D(A^{\iota})}.
	\end{equation}
	%for $t \in [0,1]$, $u \in {D(A^{\iota})}$ we have 
	%\[
	% \|B(t,u)\|_{\HS(H_0, D(A^{\iota}))} %\le c
	% \preceq
	%  1+ \|u\|_{ D(A^{\iota})}. 
	%\]
\end{assumption}
\red{
	The condition \eqref{eq:lin-DA} implies $\sup_{t\in [0,1]}\|B(t,0)\|_{\HS(H_0, D(A^{\iota}))} 
	\preceq 1$. Thus, together with \eqref{eq:lip-H} we see that Assumption~\ref{assump:cond on B} implies 
	\begin{equation}\label{eq:lin-H}
	\|B(t,u)\|_{\HS(H_0, H)}
	\preceq
	C_{\iota}(1+ \|u\|)<\infty,
	\end{equation} 
	for $t\in[0,1]$, $u\in H$, with a constant $C_{\iota}>0$.
}

%		%		%		%		%		%		%		%		%		%		%		%		%		%		%		%		%		%		%
%		%		%		%		%		%		%		%		%		%		%		%		%		%		%		%		%		%		%
We recall the following existence result, which can be found in, for example, \cite{DaPrato.D_2014_book}.
\begin{theorem}
	Suppose that the mapping $B$ satisfies Assumption~\ref{assump:cond on B} with some $\iota\ge0$. Then, for $\xi\in \red{H}$ there exists \red{an} $\red{H}$-valued continuous process
	$(X(t))_{t\in [0,1]}$ adapted to the underlying filtration satisfying the usual conditions such that
	\begin{equation}
	X(t) = S(t) \xi + \int_0^t S(t-s) B(s,X(s)) \mathrm{d}W(s), 
	\quad t \in [0,1] \quad \text{a.s.}
	\label{eq:mild-sol}
	\end{equation}
	Moreover, this process is uniquely determined a.s., and it is called
	the mild solution of~\eqref{eq:main}. 
	Further, {for any $p\ge2$ we have}
	\begin{align}\label{eq:sol sup E Hr bd}
	\sup_{ t \in [0,1] } \bbE\|X(t)\|_{ }^p < \infty.
	\end{align}
\end{theorem}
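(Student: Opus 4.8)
The plan is to establish the existence, uniqueness, and $p$-th moment bound by the standard Banach fixed-point argument in a weighted Bochner space, adapted to the mild formulation $\eqref{eq:mild-sol}$. First I would fix $p \ge 2$ and work in the space $\mathcal{H}_p$ of $H$-valued, predictable processes $Y$ on $[0,1]$ with $\|Y\|_{\mathcal{H}_p} := \sup_{t\in[0,1]} \e^{-\beta t}\big(\bbE\|Y(t)\|^p\big)^{1/p} < \infty$ for a parameter $\beta>0$ to be chosen; this is a Banach space. I would define the map $\mathcal{K}$ by $(\mathcal{K}Y)(t) = S(t)\xi + \int_0^t S(t-s)B(s,Y(s))\,\mathrm{d}W(s)$ and show it maps $\mathcal{H}_p$ into itself and is a contraction for $\beta$ large.

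The key estimates are: (i) the deterministic term satisfies $\|S(t)\xi\| \le \|\xi\|$ since $(S(t))_{t\ge0}$ is a contraction semigroup (all eigenvalues of $-A$ are positive); (ii) for the stochastic convolution I would invoke the Burkholder--Davis--Gundy-type inequality for stochastic integrals in Hilbert space (see \cite{DaPrato.D_2014_book}), giving
\begin{align*}
\bbE\Big\|\int_0^t S(t-s)B(s,Y(s))\,\mathrm{d}W(s)\Big\|^p
\preceq
\bbE\Big(\int_0^t \|S(t-s)B(s,Y(s))\|_{\HS(H_0,H)}^2\,\mathrm{d}s\Big)^{p/2};
\end{align*}
then using $\|S(t-s)\|_{\mathcal{L}(H)}\le 1$ together with the linear growth bound \eqref{eq:lin-H} and Hölder's inequality in $s$, I would bound this by a constant times $\int_0^t (1+\bbE\|Y(s)\|^p)\,\mathrm{d}s$. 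Multiplying by $\e^{-\beta p t / p}$ and pulling out $\int_0^t \e^{-\beta(t-s)}\,\mathrm{d}s \le 1/\beta$ yields the self-map property and, applied to $Y_1 - Y_2$ with \eqref{eq:lip-H} in place of linear growth, a Lipschitz constant $\le C/\beta^{1/2}$; choosing $\beta$ large makes $\mathcal{K}$ a contraction, so a unique fixed point exists in $\mathcal{H}_p$. Continuity of $t\mapsto X(t)$ and adaptedness follow from standard properties of the stochastic convolution with an analytic semigroup, and $\eqref{eq:sol sup E Hr bd}$ is immediate since the fixed point lies in $\mathcal{H}_p$ and $\e^{-\beta t}$ is bounded below on the compact interval $[0,1]$.

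The main obstacle I anticipate is the careful handling of the stochastic convolution term: one must ensure the integrand $s\mapsto S(t-s)B(s,Y(s))$ is a genuinely admissible (predictable, square-integrable) $\HS(H_0,H)$-valued process so that the BDG inequality applies, and one must confirm that linear growth \eqref{eq:lin-H} (derived in the excerpt from Assumption~\ref{assump:cond on B}) is enough even though it only controls the $H$-norm, not the $D(A^\iota)$-norm --- this is fine precisely because the moment bound $\eqref{eq:sol sup E Hr bd}$ is stated only in $H$. A secondary technical point is upgrading the fixed point (a priori an equivalence class of processes) to a genuine continuous modification; this is routine via Kolmogorov's continuity theorem or the known regularity of stochastic convolutions, and since the theorem is quoted from \cite{DaPrato.D_2014_book} one may simply cite it. Everything else is a routine application of Hölder's inequality, the semigroup contraction property, and the completeness of the weighted Bochner space.
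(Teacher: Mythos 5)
The paper does not prove this theorem itself; it simply recalls it from \cite[Sec.~7.1]{DaPrato.D_2014_book}, and your contraction-mapping argument in a weighted space of predictable processes (with the moment inequality for the stochastic integral at each fixed $t$, the Lipschitz/linear-growth bounds \eqref{eq:lip-H} and \eqref{eq:lin-H}, and continuity handled by the known regularity of stochastic convolutions) is essentially the standard proof given in that cited reference. Your proposal is correct and takes essentially the same route as the source the paper relies on.
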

%Furthermore, if  For $p \ge 1$, 
%\begin{align}\label{eq:sol sup E bd}
%\sup_{ t \in [0,1] } \bbE\|X(t)\|^p < \infty.
%\end{align}
For more details, see for example \cite[Sec.~7.1]{DaPrato.D_2014_book}. 
For the mild solution $X$, let 
\begin{equation*}
X(t) = \sumjall X_{{j}}(t) h_{{j}}, 
\quad X_{{j}}(t) = 
\innprod{X(t)}{h_{{j}}}.\label{eq:defX}
\end{equation*}
Then, the processes $X_{{j}} = (X_{{j}}(t))_{t \in [0,1]}$
satisfy the following bi-inifinite system of stochastic
differential equations:
\begin{align*}
\left\{
\begin{array}{rl}
\mathrm{d}X_{{j}}(t) &= -\lambda_{j} X_{{j}}(t) \mathrm{d}t 
+
\sumlall
\sqrt{ q_{\ell}}\innprod{B(t,X(t)) h_{{\ell}}}{ h_{{j}}} 
\mathrm{d}\beta_{{\ell}}(t) \\
X_{{j}}(0) &= \innprod{\xi}{h_{{j}}} ,
\qquad \text{ for }\quad j \in \bbN.
\end{array}\right.
\end{align*}
Each process $X_{{j}}$ is {given as}
\begin{align*}%\label{eq:defXlm}
\begin{split}
X_{{j}}(t) = &\e^{-\lambda_{j} t} \innprod{\xi}{h_{{j}}} \\
&+
\sumlall   
\sqrt{ q_{\ell}}\int_{0}^t \e^{-\lambda_{j}(t-s)}
\innprod{B(s,X(s)) h_{{\ell}}}{ h_{{j}}} 
\mathrm{d}\beta_{{\ell}}(s),
\end{split}
\end{align*}
{where the series in the second term is convergent in $L^2(\Omega)$, due to \eqref{eq:sol sup E Hr bd} and Assumption \ref{assump:cond on B}.}

We have the following spatial regularity result.
\begin{proposition}
	Suppose that Assumption~\ref{assump:cond on B} is satisfied with some $\red{\iota\in[0,1/2]}$, and that the initial condition satisfies $\xi\in  D(A^{\iota})$. 
	Then, we have the estimate
	\begin{align}
	\int_0^1 \bbE
	\|X(s)\|_{ D(A^{\iota+1/2})}^2
	\ds 
	\le 
	\|\xi\|_{ D(A^{\iota})}^2
	+ \int_0^1 \bbE \big\|B(r,X(r))\big\|_{\HS(H_0, D(A^{\iota}))}^2
	\dr.\label{eq:maximal reg shown}
	\end{align}
\end{proposition}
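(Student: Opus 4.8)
The plan is to diagonalise everything in the eigenbasis $\{h_j\}$ and to reduce the statement to a scalar It\^o-isometry computation, exploiting that $A$ and $Q$ share the eigensystem $\{h_j\}$. By definition of the fractional-power norm we have $\norm{X(s)}_{D(A^{\iota+1/2})}^2 = \sum_{j=1}^{\infty}\lambda_j^{2\iota+1}X_j(s)^2$, so by Tonelli's theorem it suffices to bound the (a priori possibly infinite) iterated expression $\sum_{j=1}^{\infty}\lambda_j^{2\iota+1}\int_0^1\bbE[X_j(s)^2]\ds$ by the right-hand side of \eqref{eq:maximal reg shown}; this also yields the asserted finiteness. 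Each term is legitimate because $\bbE[X_j(s)^2]\le\sup_{t\in[0,1]}\bbE\norm{X(t)}^2<\infty$ by \eqref{eq:sol sup E Hr bd}, and it can be evaluated from the explicit representation of $X_j$ displayed above.

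First I would compute $\bbE[X_j(s)^2]$. The initial-data part contributes the deterministic term $\e^{-2\lambda_j s}\innprod{\xi}{h_j}^2$; the cross term has zero expectation since the stochastic integral in the representation of $X_j$ is a martingale null at $0$ (condition on $\mathscr{F}_0$ to handle a random $\xi$); and the It\^o isometry for the $Q$-Wiener process $W$ applied to the one-dimensional stochastic integral gives the diffusion contribution $\sum_{\ell=1}^{\infty}q_\ell\int_0^s\e^{-2\lambda_j(s-r)}\bbE[\innprod{B(r,X(r))h_\ell}{h_j}^2]\dr$. Here one uses that $\{\sqrt{q_\ell}\,h_\ell\}$, over the indices with $q_\ell\neq0$, is an orthonormal basis of $H_0=Q^{1/2}(H)$, so that the Hilbert--Schmidt norm squared of the functional $\innprod{B(r,X(r))\,\cdot\,}{h_j}$ equals $\sum_{\ell=1}^{\infty}q_\ell\innprod{B(r,X(r))h_\ell}{h_j}^2$.

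Next I would integrate over $s\in[0,1]$, interchange the $s$-integral with $\bbE$ and with the $j$- and $\ell$-sums (all summands nonnegative, so Tonelli applies), and use the elementary bounds $\int_0^1\e^{-2\lambda_j s}\ds\le\frac1{2\lambda_j}$ and $\int_0^1\e^{-2\lambda_j(s-r)}\ds\le\frac1{2\lambda_j}$. This is exactly where the extra half-power of $A$ on the left is absorbed: the weight $\lambda_j^{2\iota+1}$ collapses to $\tfrac12\lambda_j^{2\iota}$. The initial-data part is then bounded by $\tfrac12\sum_{j=1}^{\infty}\lambda_j^{2\iota}\innprod{\xi}{h_j}^2=\tfrac12\norm{\xi}_{D(A^\iota)}^2$, and the diffusion part by
\[
\tfrac12\int_0^1\sum_{\ell=1}^{\infty}q_\ell\sum_{j=1}^{\infty}\lambda_j^{2\iota}\bbE\big[\innprod{B(r,X(r))h_\ell}{h_j}^2\big]\dr
=\tfrac12\int_0^1\bbE\norm{B(r,X(r))}_{\HS(H_0,D(A^\iota))}^2\dr,
\]
again using the orthonormal basis $\{\sqrt{q_\ell}\,h_\ell\}$ of $H_0$ together with the Parseval identity $\norm{y}_{D(A^\iota)}^2=\sum_{j=1}^{\infty}\lambda_j^{2\iota}\innprod{y}{h_j}^2$. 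Adding the two bounds gives \eqref{eq:maximal reg shown}, in fact with an extra factor $\tfrac12$ in front of each term on the right-hand side.

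The scalar It\^o isometry and the integral estimates are routine; the only points requiring care are the bookkeeping of the interchanges of summation, integration and expectation --- all licensed by nonnegativity through Tonelli --- and the fact that $X(s)$ need not a priori lie in $D(A^{\iota+1/2})$ pathwise, so the argument must be phrased as an estimate for the iterated (possibly $+\infty$) quantity, from which pathwise finiteness for a.e.\ $s$ follows a posteriori. The one place where the specific structure of the problem is genuinely used is in making sure that the infinite series defining the stochastic-convolution component $X_j$ is handled with the correct It\^o isometry for the $Q$-Wiener process; this is where the simultaneous diagonalisation of $A$ and $Q$, and hence the orthonormal basis $\{\sqrt{q_\ell}\,h_\ell\}$ of $H_0$, enters.
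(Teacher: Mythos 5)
Your argument is correct and essentially reproduces the paper's own proof: It\^o's isometry in the shared eigenbasis $\{h_j\}$, the elementary bounds $\int_0^1\e^{-2\lambda_j s}\ds\le\frac1{2\lambda_j}$ and $\int_r^1\e^{-2\lambda_j(s-r)}\ds\le\frac1{2\lambda_j}$ to absorb the extra half power of $A$, and then summation over $j$ by nonnegativity (the paper writes the Hilbert--Schmidt norm through the adjoint, $\sum_j\bbE\|B^*(r,X(r))A^{\iota}h_j\|_{H_0}^2$, where you use the orthonormal basis $\{\sqrt{q_\ell}\,h_\ell\}$ of $H_0$ --- the same computation). The only ingredient of the paper's proof you leave implicit is the verification that the right-hand side of \eqref{eq:maximal reg shown} is finite for $\iota\in[0,1/2]$ (via \eqref{eq:lin-H}, \eqref{eq:sol sup E Hr bd} and then \eqref{eq:lin-DA}), on which your ``finiteness a posteriori'' remark relies, but this does not affect the validity of the inequality you prove.
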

\begin{proof}
	It\^{o}'s isometry yields
	\begin{align*}
	\lambda_j^{2\iota+1}\bbE(X_{{j}}(s))^2 &=
	\exp(-2 \lambda_{j} s) \lambda_j^{2\iota+1}\innprod{\xi}{h_{{j}}}^2 \notag\\
	& +\! \int_0^s\! \exp(-2\lambda_{j} (s-r)) 
	\lambda_j
	\bbE \| B^*(r,X(r)) \lambda_j^{\iota+\frac12} h_{{j}}\|^2_{H_0} \mathrm{d}r,% \label{eq:EXjk}
	\end{align*}
	where $B^*(r,X(r))$ denotes the adjoint operator of $B(r,X(r))$. 
	Therefore, it holds that 
	\[
	\int_0^1 \bbE
	[
	\lambda_j^{2\iota+1}|X_{{j}}(s)|^2 
	]\ds 
	\le \lambda_j^{2\iota}\innprod{\xi}{h_{{j}}}^2
	+ \int_0^1 \bbE \|B^*(r,X(r)) \lambda_j^{2\iota} h_{{j}}\|^2_{H_0} \dr
	,\] and thus summing over $j\ge1$ %, $k\in\Lambda_{j}$ 
	yields the desired result.
	
	\red{We note that for $\iota\in[0,1/2]$ the right hand side of \eqref{eq:maximal reg shown} is finite. To see this, we first note that \eqref{eq:lin-H} together with \eqref{eq:sol sup E Hr bd} implies
		\begin{equation*}
		\int_0^1 \bbE
		\|X(s)\|_{ D(A^{1/2})}^2
		\ds 
		\le 
		\|\xi\|_{ }^2
		+ \int_0^1 \bbE \big\|B(r,X(r))\big\|_{\HS(H_0, H)}^2
		\dr<\infty.
		\end{equation*}
		Thus, from \eqref{eq:lin-DA} we have 
		\begin{align*}
		\int_0^1\! \bbE \|B(r,X(r))\|_{\HS(H_0, D(A^{\iota}))}^2\dr
		&\preceq 1 + \int_0^1\! \bbE \|X(r)\|_{D(A^{\iota})}^2 \dr\\
		&\le c_{\iota}\Big(1+ \int_0^1\! \bbE
		\|X(r)\|_{ D(A^{1/2})}^2\dr\Big)
		<\infty,
		\end{align*}
		for some constant $c_{\iota}>0$. 
	}
\end{proof}
%\begin{rem}
\remark{
	We note that the solution is spatially one half smoother than the range of $B(t,x)$. This is in general optimal, in that the solution cannot be spatially smoother in general (\cite[Example 5.3]{Kruse.R_Larsson_2012_optimal_reg}). %Note that here we need only $\xi\in  D(A^{\iota})$; in contrast, to obtain $\sup_{s\in[0,1]}\bbE\|X(s)\|_{ D(A^{^{\iota+1/2}})}^2$ we need $\xi\in  D(A^{^{\iota+1/2}})$. 
	For more details, see  \cite{Kruse.R_Larsson_2012_optimal_reg,Kruse.R_2014_lecture_note} and references therein. For recent developments of maximal regularity theory, see \cite{vanNeerven.J_2012_stomaximal,vanNeerven.J_etal_2012_sto_meaximal_evol}. 
}
%\end{rem}
%-----------------------------------------------------------------------
\section{Discretisation}\label{maximal-sec:discrete}
This section introduces the scheme proposed by M\"{u}ller-Gronbach and Ritter~\cite{Muller-Gronbach.T_Ritter_2007_FoCM_lower_bd,Muller-Gronbach.T_Ritter_2007_BIT_multiplicative}. In this regard, let us first discretise the interval $[0,1]$ with a uniform partition,
i.e., we partition the interval with $t_{i} = i/n$, for 
$i=0,1,2,\dotsc,n$. 
{For integers $J,L\in\bbN$, an It\^{o}--Galerkin approximation $\overline{X}(t_{i})$ to \eqref{eq:mild-sol} with the temporal discretisation being the implicit Euler--Maruyama scheme with a uniform time discretisation is given by}
\begin{align}
{\overline{X}^{J,L}(t_{i})
	=\sumjtrunc
	\overline{X}^{J,L}_{{j}}(t_{i}) h_{{j}},\ \text{ for }\ i=0,\dots,N},
\label{eq::ch-maximal-unif}
\end{align}
{with coefficients $\langle {\overline{X}(t_{i})},{h_{{j}}}\rangle$ defined by $
	\overline{X}^{J,L}_{{j}}(0) = \innprod{\xi}{h_{{j}}}
	$, and }
\begin{align*}
{\overline{X}}^{J,L}_{{j}}({t_{i}})
= \Big(1+\frac{\lambda_{j}}{n}\Big{)}^{-1}&\bigg({\overline{X}}^{J,L}_{{j}}{(t_{i-1})}
\nonumber\\
&+\sumltrunc
\sqrt{ q_{\ell}}\innprod{B(t_{i-1},{\overline{X}}^{J,L}{(t_{i-1})}) h_{{\ell}}}{ h_{{j}}}
( \beta_{{\ell}}{(t_{i})} - \beta_{{\ell}}{(t_{i-1})})\bigg).
\end{align*}

M\"{u}ller-Gronbach and Ritter~\cite{Muller-Gronbach.T_Ritter_2007_FoCM_lower_bd,Muller-Gronbach.T_Ritter_2007_BIT_multiplicative} noted that the projected $Q$-Wiener processes $\sqrt{ q_{\ell}}\beta_{{\ell}}=\sqrt{\innprod{Q h_{{\ell}}}{h_{{\ell}}} }\beta_{{\ell}}=\innprod{W(t)}{h_{{\ell}}}$ have varying variances depending on the index $\ell$. 
This observation motivated them to use different step-sizes depending on $\ell$. Following them, we evaluate the standard one-dimensional Wiener process $\beta_{{\ell}}$ at each level ${\ell}={1},\dots,L$ at the corresponding $n_{\ell}\in\bbN$ nodes
\[
0<t_{1,\ell}<\cdots<t_{n_{\ell},\ell}=\TMAX,
\quad\text{ where }\ 
t_{i,\ell} = \frac{i}{n_{\ell}} \quad\text{for }\  i=0,\ldots,n_{\ell}.
\]
Then, the discretisation of the truncated $Q$-Wiener process 
$\displaystyle\sumltrunc\sqrt{ q_{\ell}}\beta_{{\ell}} h_{{\ell}}$ in general results in a non-uniform time discretisation:
\begin{align*}
0 =: \tau_0 < \dots < \tau_N := \TMAX,\quad\text{ where }\quad\{\tau_0,\dotsc,\tau_{N} \} 
:=
\bigcup_{\ell=1}^{L} \{ t_{0,\ell},\ldots,t_{n_{\ell},\ell} \},
\end{align*}
and $t_{0,\ell}=\tau_0=0$ for all $\ell\in\bbN$.
To write our scheme in the recursive form, we introduce the following notations. Let
\begin{align*}
\calK_{\eta} 
:=
\big\{ \ell \in \{0,1,\dotsc,L\} \bigmid 
\tau_{\eta} \in \{t_{0,\ell},\ldots,t_{n_{\ell},\ell}  \} \big\},
\end{align*}
for $\eta=0,\dotsc,N$ and we define $s_{\eta,\ell}$ for $\eta=1,\dotsc,N$ and $\ell=1,\dotsc,L$ 
by
\[
s_{\eta,\ell} := \max \big\{\{t_{0,\ell},\dotsc,t_{n_{\ell},\ell} \}\cap [0,\tau_{\eta}) \big\}.
\]
We further introduce the following notation for the product of eigenvalues of the operator $(I-\frac 1{\tau_\nu - \tau_{\nu-1}}A)\inv$, which we use for the approximation of the semigroup generated by $A$. For any $\tau_{\eta_1}\le \tau_{\eta_2}$, we let
\begin{align}\label{def:Gamma}
\mathfrak{R}_{j}(\tau_{\eta_1},\tau_{\eta_2})
:=
%\prod_{\nu=1}^{\eta_2} \frac{1}
%{1 + \lambda_{j}(\tau_\nu - \tau_{\nu-1})}
%\Bigg(\prod_{\nu=1}^{\eta_1}
%\frac{1}{1 + \lambda_{j}(\tau_\nu - \tau_{\nu-1})}\Bigg)\inv.
%obs\Gamma_{j}(\tau_\eta)
%=
%\prod_{\nu=1}^{N} \frac{1}
%{1 + \lambda_{j}(\tau_\eta\wedge \tau_\nu - \tau_\eta\wedge \tau_{\nu-1})}
%=
%\prod_{\nu=1}^{\eta} \frac{1}
%{1 + \lambda_{j}(\tau_\nu - \tau_{\nu-1})},\qquad \eta\in \{1,\dotsc,N\}.
{\prod_{\nu=\eta_1+1}^{\eta_2} \frac{1}{1 + \lambda_{j}(\tau_\nu - \tau_{\nu-1})},}
\end{align}
{with the convention $\prod_{\emptyset}=1$.} 
Note that $s_{\eta,\ell},t_{i-1,\ell}\in\{\tau_{1},\dots,\tau_{N}\}$.
Then, for $\eta=1,\dotsc,N$, the drift-implicit Euler--Maruyama scheme in the recursive form is given by,
\begin{equation} %\label{eq:drift Euler}
\begin{aligned}
\what{X}^{J,L}_{{j}}(\tau_{\eta}) = 
\mathfrak{R}_{j}(\tau_{\eta-1},\tau_{\eta})
\Bigg(
\what{X}^{J,L}_{{j}}(\tau_{\eta-1})
+
&{\sum_{\ell \in \calK_{\eta}}}
%\sum_{m\in\Lambda_\ell}
\sqrt{ q_{\ell}}\innprod{B(s_{\eta,\ell}, \what{X}^{J,L}(s_{\eta,\ell}) ) h_{{\ell}}}{h_{{j}}}\\
&\times
\mathfrak{R}_{j}(s_{\eta,\ell},\tau_{\eta-1})
( \beta_{{\ell}}(\tau_{\eta}) - \beta_{{\ell}}(s_{\eta,\ell}))
\Bigg).
\end{aligned}
\label{eq:def-Xjk-rec}
\end{equation}
Equivalently, the above can be written in the convolution form
\begin{align}
\what{X}^{J,L}_{{j}}(\tau_{\eta})
&=
\mathfrak{R}_{j}(\tau_0,\tau_{\eta}) \innprod{\xi}{h_{{j}}}
+
\sumltrunc
\sum_{\tau_1\le t_{i,\ell} \le \tau_{\eta}}
\sqrt{ q_{\ell}}\innprod{B(t_{i-1,\ell}, \what{X}^{J,L}(t_{i-1,\ell})) h_{{\ell}}}{ h_{{j}}}
\nonumber\\
&\phantom{\mathfrak{R}_{j}(\tau_0,\tau_{\eta}) \innprod{\xi}{h_{{j}}}}\quad\quad\times
\mathfrak{R}_{j}(t_{i-1,\ell},\tau_{\eta})
( \beta_{{\ell}}(t_{i,\ell}) - \beta_{{\ell}}(t_{i-1,\ell})).
\label{eq:Euler conv}
\end{align}
Then, we use 
\begin{align}
\what{X}^{J,L}(\tau_{\eta})
=%\sum_{j=1}^{J}\sum_{k\in\Lambda_{j}}
\sumjtrunc
\what{X}^{J,L}_{{j}}(\tau_{\eta}) h_{{j}}
\label{eq:full-scheme}
\end{align}
for $\eta=1,\dots,N$ as our approximate solution.

{We note that this scheme generalises the aforementioned approximation $\overline{X}^{J,L}$ with the uniform time step as in \eqref{eq::ch-maximal-unif}:  
	$\overline{X}^{J,L}$ is nothing but $\what{X}^{J,L}$ with 
	$n_{\ell}=N$ for $\ell=1,\dots,L$.
}

%%%%%%%%%%%%%%%%%%%%%%%%%%%%%%%%%%%%%%%%%%%%%%%%%%%%%%%%%%%%%%%%
\section{Discrete regularity estimate}\label{maximal-sec:disc reg estim}
{First, let} $\mathscr{P}_{J}x := \sumjtrunc \innprod{x}{h_{{j}}} h_{{j}}$ for $x\in H$. {Further, by writing $\prod_{\emptyset}=I$ we let}
\begin{align}
{{R}(\tau_{\eta_1},\tau_{\eta_2};A):=
	\prod_{\nu=\eta_1+1}^{\eta_2} \Big(I-\frac{1}{\tau_{\nu}-\tau_{\nu-1}}A\Big)^{-1},
}
\end{align}
{where the meaning of the product symbol is unambiguous due to the commutativity of $(I-\frac{1}{\tau_{\nu}-\tau_{\nu-1}}A)^{-1}$'s.}

For $j\in\{1,\dotsc,J\}$ %, $k\in\Lambda_{j}$, 
and $\eta\in\{1,\dotsc,N\}$, define 
\begin{align}
\begin{split}
&
\dstoconv{ B(\cdot, \what{X}^{J,L}(\cdot)) }(\tau_{\eta}) \\
&:=
\sumltrunc \sum_{\tau_1\le t_{i,\ell} \le \tau_{\eta}}
\mathscr{P}_J {R}(t_{i-1,\ell},\tau_{\eta};A)
B(t_{i-1,\ell}, \what{X}^{J,L}(t_{i-1,\ell})) 
\sqrt{ q_{\ell}} h_{{\ell}}%\\
%&\phantom{:=}\times
(\beta_{{\ell}}(t_{i,\ell}) - \beta_{{\ell}}(t_{i-1,\ell})).
\end{split}
\label{def:disc conv} %used
\end{align}
For $\xi=0$ and $B(t_{i-1,\ell}, \what{X}^{J,L}(t_{i-1,\ell}))= B(t_{i-1,\ell})$ the equation~\eqref{def:disc conv} is a discrete analogue of the stochastic convolution. 
%In this paper, we abuse the notation slightly by calling $\dstoconv{ B(\cdot, \what{X}^{J,L}(\cdot)) }(\tau_{\eta})$ the discretised stochastic convolution regardless of the value $\xi$ and the dependency of $B$ on the spatial variable.
%
The Fourier coefficients of~\eqref{def:disc conv} are given by
\begin{align*}
&\dsconvjk{ B(\cdot, \what{X}^{J,L}(\cdot)) }(\tau_{\eta}) 
:=
\innprod{\dstoconv{ B(\cdot, \what{X}^{J,L}(\cdot)) }(\tau_{\eta})}
{h_{{j}}} \\
&= 
\sumltrunc \sum_{\tau_1\le t_{i,\ell} \le \tau_{\eta}}
\sqrt{ q_{\ell}}\mathfrak{R}_{j}(t_{i-1,\ell},\tau_{\eta})
\innprod{B(t_{i-1,\ell}, \what{X}^{J,L}(t_{i-1,\ell})) h_{{\ell}}}{h_{{j}}}
%\\%&\phantom{=}\times 
( \beta_{{\ell}}(t_{i,\ell}) - \beta_{{\ell}}(t_{i-1,\ell})),
\end{align*}
for $j\in\{1,\dotsc,J\}$ and $\eta\in\{1,\dotsc,N\}$.
Then, {noting that by the assumptions on $A$ we have 
	$((I-\lambda A)^{-1})^*=(I-\lambda A)^{-1}$ for $\lambda\in (0,\infty)$, the Fourier coefficients of the discretised solution are given by}
\begin{align*}
\what{X}^{J,L}_{{j}}(\tau_{\eta})
=
\mathfrak{R}_{j}(\tau_0,\tau_{\eta})\innprod{\xi}{h_{{j}}}
+
\dsconvjk{ B(\cdot, \what{X}^{J,L}(\cdot)) }(\tau_{\eta}).
%\label{eq:sol w/ init n cov}
\end{align*}

%Our goal is to estimate the expectation of 
%$\|\what{X}^{J,L}(\tau_{\eta})\|_{ D(A^{^{\iota+1/2}})}^p$. 
%$\|\what{X}^{J,L}(\tau_{\eta})\|_{ D(A^{r})}^{2}$. 
For any $r\ge0$ we have
\begin{align}
\mathbb{E}\|
\what{X}^{J,L} (\tau_{\eta})
\|_{ D(A^{r})}^{2}
%=
%\mathbb{E}\sumjtrunc
%	\lambda_{j}^{2 r}|\what{X}^{J,L}_{{j}}(\tau_{\eta})|^2\notag\\
=&
\sumjtrunc
\lambda_j^{2 r}
\big|
\mathfrak{R}_{j}(\tau_0,\tau_{\eta})\innprod{\xi}{h_{{j}}}
\big|^2
%\notag\\
%&\quad\quad
+
\sumjtrunc
\lambda_j^{2 r}
\mathbb{E}
\big|
\dsconvjk{ B(\cdot, \what{X}^{J,L}(\cdot)) }(\tau_{\eta})
\big|^2.
\label{eq:decomp norm}
\end{align}
\red{Our first goal is to estimate the second term in the right hand side of \eqref{eq:decomp norm}.}
\red{We see this term as the stochastic integral of a representation of an elementary process. }
%
%
% define the elementary process
%

Let $
\mathscr{P}_{{\ell}}x := \innprod{x}{h_{{\ell}}} h_{{\ell}}$ for $\ell\ge1$, %$m\in\Lambda_{\ell}$, 
and let $\iota\ge0$ be the index from Assumption~\ref{assump:cond on B}. 
For $\nu\in \{1,\dotsc,\eta\}$, we define 
%$(\phi^{J,(\eta)}_{{\ell}})_{\nu-1}\from \Omega\to \HS(H_0, D(A^{\iota}))$ by 
an $\HS(H_0, D(A^{\iota}))$-valued random variable $(\phi^{J,(\eta)}_{{\ell}})_{\nu-1}$ by 
\begin{comment}
\begin{empheq}
[left={
(\phi^{J,(\eta)}_{{\ell}})_{\nu-1}(\omega)
:=
\empheqlbrace}]{alignat=2}
\!\!&\mathscr{P}_J \mathfrak{R}(s_{\nu,\ell},\tau_{\eta};A)
B(
s_{\nu,\ell},\what{X}^{J,L}(s_{\nu,\ell})
)
\mathscr{P}_{{\ell}}
&&\qquad \parbox[c]{.3\textwidth}{
$\ell\in\calK_{\eta}$,  or\\ 
$\ell\notin\calK_{\eta}$ and 
$\ell\in\calK_{\mu}$\\ for some $\mu\in\{\nu,\dotsc,\eta\}$}
\\[10pt]
%
&0_{H_0\to H}
&&\qquad 
\parbox[c]{.3\textwidth}{
$\ell\not\in\calK_{\eta}$ and $s_{\eta,\ell}<\tau_{\nu}$
\\ $\iff$ 
$\ell\not\in\calK_{\mu}$ for all $\mu\in\{\nu,\dotsc,\eta\}$.}
\end{empheq}
Equivalently
\end{comment}
\begin{subequations}
	\begin{empheq}[left={
		\hspace{-0.2em}
		(\phi^{J,(\eta)}_{{\ell}})_{\nu-1}
		\!:=\!
		\empheqlbrace}]{alignat=2}
	&\mathscr{P}_J {R}(s_{\nu,\ell},\tau_{\eta};A)
	B (
	s_{\nu,\ell},\what{X}^{J,L}(s_{\nu,\ell})
	)
	\mathscr{P}_{{\ell}}
	& &\quad\text{ if }\ell\in\Xi_{\nu}
	\label{eq:ell st t_i,ell = tau} \\[10pt]
	&0_{H_0\to H}
	& &\quad\text{ if }\ell\not\in\Xi_{\nu},
	\label{eq:ell st t_i,ell exceeds tau}
	\end{empheq}
\end{subequations}
where 
\begin{align}
\Xi_{\nu}:=
\big\{
\ell\in\{1,\dotsc,L\}\bigmid
\ell\in\calK_{\mu}\text{ for some }\mu\in\{\nu,\dotsc,\eta\}
\big\}.\label{eq:def Xi_nu}
\end{align}
%\footnote{Note on the relations of grids: If $\ell\in\calK_{\eta-1}$, then $s_{\eta,\ell}=\tau_{\eta-1}$. If $\ell\not\in\calK_{\eta-1}$, then $s_{\eta,\ell}<\tau_{\eta-1}$. So the correct thing to say is note that for $\nu=\eta-1$ we have $\ell\not\in\calK_{\eta-1}$ if only if $s_{\eta,\ell}<\tau_{\eta-1}$. Not sure if this is necessary to say, though.}
%

We elaborate on the notation. First, note the following: for $\ell\not\in\calK_{\nu}$, $\nu\in\{0,\dotsc,\eta\}$ if the index $i'\in\{1,\dotsc,n_{\ell}\}$ is such that $s_{\nu,\ell}=t_{i'-1,\ell}$, then we have $\tau_{\nu}<t_{i',\ell}$.
%\footnote{Also note that $(t_{i'-1,\ell}=)s_{\nu,\ell}=\tau_{\nu-4}<\tau_{\nu-3}<\tau_{\nu-2}<\tau_{\nu-1}<\tau_{\nu}<t_{i',\ell}$ can occur.} 
The separate treatment~\eqref{eq:ell st t_i,ell exceeds tau} corresponds to the construction of the algorithm: 
suppose $\ell\in\{1,\dotsc,L\}$ and $i^{*}\in\{1,\dotsc,n_{\ell}\}$  satisfy $s_{\eta,\ell}=t_{i^{*}-1,\ell}$ and $\tau_{\eta}<t_{i^{*},\ell}$, then the evaluation $\beta_{{\ell}}(t_{i^{*},\ell})$ of the Brownian motion $\beta_{{\ell}}$ at $t_{i^{*},\ell}$ is not used to obtain $\what{X}^{J,L}_{{j}}(\tau_{\eta})${;} only up to $\beta_{{\ell}}(t_{0,\ell}),\dots,\beta_{{\ell}}(t_{i^{*}-1,\ell})$ are used.

Let us define the elementary process 
$\Phi^{J,(\eta)}_{{\ell}}\from\Omega\times [0,\tau_{\eta}]\to {\HS(H_0, D(A^{\iota}))}$ by 
\begin{align}
\Phi^{J,(\eta)}_{{\ell}}(\omega,t)
:=
\sum_{\nu=1}^{\eta}
(\phi^{J,(\eta)}_{{\ell}})_{\nu-1}(\omega)
\,\indic_{(\tau_{\nu-1},\tau_{\nu}]}(t).
\label{eq:def-Phi}
\end{align}
Then, we have the following.
\begin{lemma}\label{lem:sto int indentity}
	Let $\dsconvjk{ B(\cdot, \what{X}^{J,L}(\cdot)) }(\cdot)$ be defined by~\eqref{def:disc conv} 
	and let Assumption~\ref{assump:cond on B} hold with $\iota\ge0$. Then, for $j=1\dotsc,J$ %, and $k\in\Lambda_{j}$ 
	we have 
	\begin{align*}
	%\what{X}^{J,L}_{{j}}(\tau_{\eta})
	\dsconvjk{ B(\cdot, \what{X}^{J,L}(\cdot)) }(\tau_{\eta})
	=
	\innprod{
		\int_{0}^{\tau_\eta}
		\sumltrunc \Phi^{J,(\eta)}_{{\ell}}(s) \dW(s)
	}{h_{{j}}}.
	\end{align*}
\end{lemma}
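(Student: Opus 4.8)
The plan is to verify the identity by expanding the stochastic integral on the right-hand side and matching it term-by-term with the definition~\eqref{def:disc conv} of the discrete stochastic convolution. First I would note that $\sum_{\ell=1}^{L}\Phi^{J,(\eta)}_{\ell}$ is a finite sum of elementary $\HS(H_0,D(A^{\iota}))$-valued processes of the form~\eqref{eq:def-Phi}, so the It\^{o} integral $\int_{0}^{\tau_\eta}\sum_{\ell=1}^{L}\Phi^{J,(\eta)}_{\ell}(s)\,\dW(s)$ is defined in the textbook way as $\sum_{\ell=1}^{L}\sum_{\nu=1}^{\eta}(\phi^{J,(\eta)}_{\ell})_{\nu-1}\bigl(W(\tau_{\nu})-W(\tau_{\nu-1})\bigr)$. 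Pairing with $h_{j}$ and using $W(t)=\sum_{\ell'\ge1}\sqrt{q_{\ell'}}\beta_{\ell'}(t)h_{\ell'}$ together with the projector $\mathscr{P}_{\ell}$ appearing in~\eqref{eq:ell st t_i,ell = tau}, each summand collapses to $\sqrt{q_{\ell}}\,\bigl\langle R(s_{\nu,\ell},\tau_{\eta};A)B(s_{\nu,\ell},\what{X}^{J,L}(s_{\nu,\ell}))h_{\ell},h_{j}\bigr\rangle\bigl(\beta_{\ell}(\tau_{\nu})-\beta_{\ell}(\tau_{\nu-1})\bigr)$, and using self-adjointness of $(I-\lambda A)^{-1}$ and the eigen-relation, the operator factor becomes $\mathfrak{R}_{j}(s_{\nu,\ell},\tau_{\eta})$.

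The main work is a careful re-indexing argument. On the left we have a double sum over $\ell$ and over the Brownian grid points $t_{i,\ell}$ with $\tau_1\le t_{i,\ell}\le\tau_{\eta}$, with increments $\beta_{\ell}(t_{i,\ell})-\beta_{\ell}(t_{i-1,\ell})$; on the right, after expansion, a double sum over $\ell$ and over $\nu\in\{1,\dotsc,\eta\}$ with increments $\beta_{\ell}(\tau_{\nu})-\beta_{\ell}(\tau_{\nu-1})$ but only for those $\nu$ with $\ell\in\Xi_{\nu}$. The key observation to reconcile these is that for fixed $\ell$ the grid $\{t_{0,\ell},\dotsc,t_{n_\ell,\ell}\}$ is a sub-grid of $\{\tau_0,\dotsc,\tau_N\}$, so a single Brownian increment $\beta_{\ell}(t_{i,\ell})-\beta_{\ell}(t_{i-1,\ell})$ over a ``coarse'' interval $(t_{i-1,\ell},t_{i,\ell}]$ telescopes into the sum of the ``fine'' increments $\beta_{\ell}(\tau_{\nu})-\beta_{\ell}(\tau_{\nu-1})$ over all $\nu$ with $(\tau_{\nu-1},\tau_{\nu}]\subset(t_{i-1,\ell},t_{i,\ell}]$; and for each such $\nu$ one has $s_{\nu,\ell}=t_{i-1,\ell}$, so the coefficient $\mathfrak{R}_{j}(t_{i-1,\ell},\tau_{\eta})$ on the left equals $\mathfrak{R}_{j}(s_{\nu,\ell},\tau_{\eta})$ on the right and the operator $B(t_{i-1,\ell},\what{X}^{J,L}(t_{i-1,\ell}))=B(s_{\nu,\ell},\what{X}^{J,L}(s_{\nu,\ell}))$ agrees. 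Thus each coarse term on the left is exactly the sum of the corresponding fine terms on the right. Conversely, $\nu\in\{1,\dotsc,\eta\}$ contributes on the right (i.e.\ $\ell\in\Xi_{\nu}$) precisely when $(\tau_{\nu-1},\tau_{\nu}]$ lies inside some coarse interval $(t_{i-1,\ell},t_{i,\ell}]$ with $t_{i,\ell}\le\tau_{\eta}$, which is the bookkeeping content of the definitions of $\calK_{\mu}$, $s_{\nu,\ell}$ and $\Xi_{\nu}$, and is exactly why the zero case~\eqref{eq:ell st t_i,ell exceeds tau} is needed: a fine interval lying beyond the last used grid point $t_{i^{*}-1,\ell}$ carries no contribution.

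I expect the main obstacle to be precisely this combinatorial matching of the two index sets, i.e.\ proving rigorously that $\{\nu : \ell\in\Xi_{\nu}\}$ decomposes, for each fixed $\ell$, as a disjoint union over the coarse intervals $(t_{i-1,\ell},t_{i,\ell}]$ with $\tau_1\le t_{i,\ell}\le\tau_{\eta}$ of the sets $\{\nu : (\tau_{\nu-1},\tau_{\nu}]\subseteq(t_{i-1,\ell},t_{i,\ell}]\}$, together with the identification $s_{\nu,\ell}=t_{i-1,\ell}$ on each block. Once this is established, summing the fine increments within each block telescopes back to the single coarse increment, and the two expressions coincide termwise; everything else (linearity of the integral, the pairing with $h_j$, self-adjointness of the resolvent factors) is routine.
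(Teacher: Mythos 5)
Your proposal is correct and follows essentially the same route as the paper's proof: the paper also reduces the identity to regrouping the coarse Brownian increments $\beta_{\ell}(t_{i,\ell})-\beta_{\ell}(t_{i-1,\ell})$ into the fine increments $\beta_{\ell}(\tau_{\nu})-\beta_{\ell}(\tau_{\nu-1})$ with $s_{\nu,\ell}=t_{i-1,\ell}$ constant on each block (organised there via the sets $\calS_{\mu}=\calK_{\eta-\mu}\setminus\bigcup_{\mu'<\mu}\calK_{\eta-\mu'}$, which is the same bookkeeping as your $\Xi_{\nu}$ decomposition), then passes to inner products with the $W$-increments, uses self-adjointness of the resolvent factors, and invokes the definition of the stochastic integral of an elementary process. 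You simply run the same telescoping argument from the stochastic-integral side rather than from the discrete-convolution side, which changes nothing essential.
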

\begin{proof}
	Fix $\eta\in\{1,\dots, N\}$. Let $\calS_{\mu}:=\calK_{\eta-\mu}\setminus(\bigcup_{\mu'\in\{0,\dotsc,\mu-1\}}\calK_{\eta-\mu'})$ for $\mu,\eta\in\{1,\dots, N\}$ with $\mu\le\eta$, and let 
	$\calS_{0}:=\calK_{\eta}$.
	Then, we have
	\begin{align*}
	\begin{split}
	%&\what{X}^{J,L}_{{j}}(\tau_{\eta})  \\
	&\dsconvjk{ B(\cdot, \what{X}^{J,L}(\cdot)) }(\tau_{\eta}) \\
	&=
	\sum_{\nu=1}^{\eta}
	\sum_{ \ell\in \calK_{\eta}}
	%\sum_{m\in \Lambda_{\ell}}\!\!\!
	\sqrt{ q_{\ell}}\mathfrak{R}_{j}( s_{\nu,\ell} ,\tau_{\eta})
	\innprod{B(  s_{\nu,\ell} , \what{X}^{J,L}( s_{\nu,\ell} )) h_{{\ell}}}{h_{{j}}}
	( \beta_{{\ell}}( \tau_{\nu} ) - \beta_{{\ell}}( \tau_{\nu-1} ))  \\
	&+
	\sum_{\nu=1}^{\eta-1}
	%\sum_{ \ell\in \calK_{\eta-1}\setminus\calK_{\eta}}
	\sum_{ \ell\in \calS_{1}}
	%\sum_{m\in \Lambda_{\ell}}
	\sqrt{ q_{\ell}}\mathfrak{R}_{j}( s_{\nu,\ell} ,\tau_{\eta})
	\innprod{B(  s_{\nu,\ell} , \what{X}^{J,L}( s_{\nu,\ell} )) h_{{\ell}}}{h_{{j}}}
	( \beta_{{\ell}}( \tau_{\nu} ) - \beta_{{\ell}}( \tau_{\nu-1} ))  \\
	%&+
	%\sum_{\nu=1}^{\eta-2}
	%\sum_{ \ell\in \calK_{\eta-2}\setminus(\calK_{\eta-1}\cup\calK_{\eta})}
	%	\sum_{m\in \Lambda_{\ell}}
	% \sqrt{ q_{\ell}}\mathfrak{R}_{j}( s_{\nu,\ell} ,\tau_{\eta})
	% \innprod{B(  s_{\nu,\ell} , \what{X}^{J,L}( s_{\nu,\ell} )) h_{{\ell}}}{h_{{j}}}
	%( \beta_{{\ell}}( \tau_{\nu} ) - \beta_{{\ell}}( \tau_{\nu-1} ))  \\
	&\shortvdotswithin{+}
	&+
	\sum_{\nu=1}^{\eta-\mu}
	\sum_{ \ell\in \calS_{\mu}}
	%\sum_{m\in \Lambda_{\ell}}\!\!\!
	\sqrt{ q_{\ell}}\mathfrak{R}_{j}( s_{\nu,\ell} ,\tau_{\eta})
	\innprod{B(  s_{\nu,\ell} , \what{X}^{J,L}( s_{\nu,\ell} )) h_{{\ell}}}{h_{{j}}}
	( \beta_{{\ell}}( \tau_{\nu} ) - \beta_{{\ell}}( \tau_{\nu-1} ))  \\
	&\shortvdotswithin{+}
	&+
	%\sum_{\nu=1}^{\eta-(\eta-1)}
	%\sum_{\nu=1}^{1}
	\sum_{ \ell\in \calS_{\eta-1}}
	%\sum_{m\in \Lambda_{\ell}}
	\sqrt{ q_{\ell}}\mathfrak{R}_{j}( s_{1,\ell} ,\tau_{\eta})
	\innprod{B(  s_{1,\ell} , \what{X}^{J,L}( s_{1,\ell} )) h_{{\ell}}}{h_{{j}}}
	( \beta_{{\ell}}( \tau_{1} ) - \beta_{{\ell}}( \tau_{0} )).
	\end{split}
	\end{align*}
	Further, we can rewrite the above as
	\begin{align*}
	&\dsconvjk{ B(\cdot, \what{X}^{J,L}(\cdot)) }(\tau_{\eta}) \\
	=&\!\!\sum_{\mu=0}^{\eta-1}\!\!\,
	\sum_{\nu=1}^{\eta-\mu}\!\!\,
	\sum_{ \ell\in \calS_{\mu}}\!\,
	%\sum_{m\in \Lambda_{\ell}}\!\!\!
	\big\langle\!B(  s_{\nu,\ell} , \what{X}^{J,L}( s_{\nu,\ell} )) \mathscr{P}_{{\ell}}
	( W( \tau_{\nu} ) - W( \tau_{\nu-1} ))
	, R( s_{\nu,\ell} ,\tau_{\eta};A)\mathscr{P}_J h_{{j}}\!\big\rangle.
	\end{align*}
	By the assumptions on $A$ we have 
	$((I-\lambda A)^{-1})^*=(I-\lambda A)^{-1}$ for $\lambda\in (0,\infty)$, and thus
	\[
	%\what{X}^{J,L}_{{j}}(\tau_{\eta})
	\dsconvjk{ B(\cdot, \what{X}^{J,L}(\cdot)) }(\tau_{\eta})
	=
	\innprod{\sum_{\nu=1}^{\eta}\!
		\bigg(
		\sumltrunc
		(\phi^{J,(\eta)}_{{\ell}})_{\nu-1}
		\bigg)
		\big(
		W(\tau_{\nu})-W(\tau_{\nu-1})
		\big)
	}{h_{{j}}\!}.
	\]
	By definition of the stochastic integral of elementary processes the statement follows.
\end{proof}
%
%------------end	of	proof------------
%
%
%------------start proposition------------
%
Using the previous result, we {obtain the following estimate.}
\begin{proposition}\label{prop:Burkholder}
	Let Assumption~\ref{assump:cond on B} hold.
	Let $\eta\in\{1,\dotsc,N \}$. 
	For $p\ge1$, suppose that the process defined by~\eqref{eq:ell st t_i,ell = tau}--\eqref{eq:ell st t_i,ell exceeds tau} satisfies 
	\begin{align}
	\mathbb{E}\bigg[{
		\sum_{\nu=1}^{\eta}
		\bigg\|
		\sumltrunc (\phi^{J,(\eta)}_{{\ell}})_{\nu-1}
		\bigg\|_{ \HS(H_0,  D(A^{\iota}) ) }^{2}
		(\tau_{\nu} - \tau_{\nu-1})}\bigg]<\infty.\label{eq:assump Burkholder}
	\end{align}
	Then, we have
	\begin{align}
	\mathbb{E}\bigg[
	{\big\|}&
	\dstoconv{ B(\cdot, \what{X}^{J,L}(\cdot)) }(\tau_{{\eta}})
	\big\|_{ D(A^{\iota})}^{{2}}
	\bigg]
	%\nonumber\\&
	\le
	\mathbb{E}
	{\bigg[
		\sum_{\nu=1}^{\eta}
		\bigg\|
		\sumltrunc (\phi^{J,(\eta)}_{{\ell}})_{\nu-1}
		\bigg\|_{ \HS(H_0,  D(A^{\iota}) ) }^{2}
		(\tau_{\nu} - \tau_{\nu-1})
	}
	\bigg].\label{eq:E[|(tau_eta)|Lp]}
	\end{align}
	\begin{comment}
	in particular,
	\begin{align*}
	\mathbb{E}\bigg[
	\big\|&
	\max_{\tau_{\nu}=0,\dotsc,\tau_{\eta}}
	%\what{X}^{J,L}(\tau_{\eta})
	%\dsconvjk{ B(\cdot, \what{X}^{J,L}(\cdot)) }(\tau_{\eta})
	%\dsconvjk{ B(\cdot, \what{X}^{J,L}(\cdot)) }(\tau_{{\nu}})
	\dstoconv{ B(\cdot, \what{X}^{J,L}(\cdot)) }(\tau_{\nu})
	\big\|_{ D(A^{\iota})}^{p}
	\bigg]\\
	&\le
	C_p % T^{\frac{p-2}2}
	\sum_{\nu=1}^{\eta}
	\mathbb{E}
	\bigg[
	\bigg\|
	\sumltrunc (\phi^{J,(\eta)}_{{\ell}})_{\nu-1}
	\bigg\|_{ \HS(H_0,  D(A^{\iota}) ) }^{p}
	\bigg](\tau_{\nu} - \tau_{\nu-1}).
	\end{align*}
	\end{comment}
\end{proposition}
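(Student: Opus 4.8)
The plan is to apply the Burkholder–Davis–Gundy-type inequality for stochastic integrals of $\HS$-valued elementary processes, specialised to $p=2$ where it reduces to the It\^{o} isometry. By Lemma~\ref{lem:sto int indentity}, the Fourier coefficients $\dsconvjk{ B(\cdot, \what{X}^{J,L}(\cdot)) }(\tau_{\eta})$ are exactly the coefficients of the stochastic integral $\int_0^{\tau_\eta} \sumltrunc \Phi^{J,(\eta)}_{{\ell}}(s)\,\dW(s)$ against $h_j$. Since $\mathscr{P}_J$ appears in the definition of each $(\phi^{J,(\eta)}_{{\ell}})_{\nu-1}$, the integrand takes values in $\spn\{h_1,\dotsc,h_J\}\subset D(A^{\iota})$, so $\dstoconv{ B(\cdot, \what{X}^{J,L}(\cdot)) }(\tau_{\eta}) = \sumjtrunc \dsconvjk{ B(\cdot, \what{X}^{J,L}(\cdot)) }(\tau_{\eta}) h_j$ equals that same stochastic integral as an element of $D(A^{\iota})$.

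First I would verify that $\Phi^{J,(\eta)} := \sumltrunc \Phi^{J,(\eta)}_{{\ell}}$ is a genuine $\HS(H_0, D(A^{\iota}))$-valued elementary process adapted to the filtration: each $(\phi^{J,(\eta)}_{{\ell}})_{\nu-1}$ is $\mathscr{F}_{\tau_{\nu-1}}$-measurable because it depends on $\what{X}^{J,L}(s_{\nu,\ell})$ with $s_{\nu,\ell} \le \tau_{\nu-1}$ (or vanishes), and it has finite rank hence lies in $\HS(H_0, D(A^{\iota}))$; the assumption~\eqref{eq:assump Burkholder} with $p\ge 1$ (in particular $p=1$, which is what the $L^2$ isometry needs) guarantees square-integrability of the integrand in the appropriate sense. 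Then I would invoke the It\^{o} isometry in the Hilbert space $D(A^{\iota})$: for an elementary process $\Psi$ one has $\mathbb{E}\|\int_0^{\tau_\eta}\Psi\,\dW\|_{D(A^{\iota})}^2 = \mathbb{E}\int_0^{\tau_\eta}\|\Psi(s)\|_{\HS(H_0,D(A^{\iota}))}^2\,\ds$, and since $\Phi^{J,(\eta)}$ is piecewise constant on the intervals $(\tau_{\nu-1},\tau_\nu]$ the time integral collapses to the sum $\sum_{\nu=1}^\eta \|\sumltrunc(\phi^{J,(\eta)}_{{\ell}})_{\nu-1}\|_{\HS(H_0,D(A^{\iota}))}^2(\tau_\nu-\tau_{\nu-1})$. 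That already gives~\eqref{eq:E[|(tau_eta)|Lp]} with equality; the statement only claims ``$\le$'', so I would simply record the isometry and note the inequality follows.

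A subtle point worth spelling out: the $\HS$-norm $\|\sumltrunc(\phi^{J,(\eta)}_{{\ell}})_{\nu-1}\|_{\HS(H_0,D(A^{\iota}))}^2$ on the right-hand side should really be computed using the Cameron--Martin inner product on $H_0$; since the $h_\ell$ are eigenfunctions of $Q$, the vectors $\sqrt{q_\ell}\,h_\ell$ (equivalently $h_\ell/\|Q^{-1/2}h_\ell\|$-normalised appropriately) form an orthonormal basis of $H_0$, and the $\mathscr{P}_{{\ell}}$ factor together with the $\sqrt{q_\ell}$ in the increments $W(\tau_\nu)-W(\tau_{\nu-1}) = \sumlall \sqrt{q_\ell}(\beta_\ell(\tau_\nu)-\beta_\ell(\tau_{\nu-1}))h_\ell$ is exactly what makes the It\^{o} isometry bookkeeping consistent with Lemma~\ref{lem:sto int indentity}. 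I would make one line to the effect that the orthogonality of distinct $\beta_\ell$-increments and of increments over disjoint time intervals is what produces the diagonal sum.

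The main obstacle is not any hard estimate — the inequality is essentially the It\^{o} isometry — but rather the careful justification that the finitely-indexed sum $\sumltrunc \Phi^{J,(\eta)}_{{\ell}}$ is admissible as an integrand and that the identity of Lemma~\ref{lem:sto int indentity}, which is stated coefficient-wise against each $h_j$, upgrades to an identity of $D(A^{\iota})$-valued random variables (this uses finiteness of the index set $j\in\{1,\dotsc,J\}$ and the presence of $\mathscr{P}_J$). Once that bookkeeping is in place, applying the isometry for elementary processes and exploiting piecewise-constancy in time to pass from $\int_0^{\tau_\eta}\|\cdot\|^2\ds$ to the Riemann-type sum $\sum_{\nu=1}^\eta\|\cdot\|^2(\tau_\nu-\tau_{\nu-1})$ finishes the proof; hypothesis~\eqref{eq:assump Burkholder} is exactly the integrability needed to make every step legitimate.
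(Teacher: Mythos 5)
Your proposal is correct and follows essentially the same route as the paper: Lemma~\ref{lem:sto int indentity} identifies the discrete convolution with the stochastic integral of the elementary process $\sumltrunc\Phi^{J,(\eta)}_{{\ell}}$, and then It\^{o}'s isometry in $D(A^{\iota})$ together with piecewise constancy in time yields the sum on the right-hand side, with~\eqref{eq:assump Burkholder} supplying the required integrability. The only (harmless) difference is that you observe the presence of $\mathscr{P}_J$ upgrades the coefficient-wise identity to an identity of $D(A^{\iota})$-valued random variables, so the bound is in fact an equality, whereas the paper simply bounds the truncated sum $\sum_{j\le J}$ of squared coefficients by the full norm of the stochastic integral and settles for the inequality.
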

%
%------------begin	proof------------
%
\begin{proof}
	For any $\eta\in\{1,\dotsc,N \}$, from Lemma~\ref{lem:sto int indentity} we have
	\begin{align*}
	\mathbb{E}\big[&
	\big\|
	%\what{X}^{J,L}(\tau_{\eta})
	\dstoconv{ B(\cdot, \what{X}^{J,L}(\cdot)) }(\tau_{\eta})
	\big\|_{ D(A^{\iota})}^{{2}}
	\big]=
	\mathbb{E}\bigg[
	\sumjtrunc
	\lambda_{j}^{2r}
	\bigg|
	\bigg\langle{\int_{0}^{\tau_\eta}
		\sumltrunc \Phi^{J,(\eta)}_{{\ell}}(s) \dW(s)},{h_{{j}}}
	\bigg\rangle
	\bigg|^2
	\bigg].
	\end{align*}
	\begin{comment}
	Now, let 
	\begin{align}
	())_{\mu-1}
	:=
	B(
	s_{\mu,\ell},\what{X}^{J,L}(s_{\mu,\ell})
	)
	\sqrt{ q_\ell}h_{{\ell}},\qquad (\mu\in\{1,\dotsc,N \}).
	\end{align}
	Then,
	\begin{align}
	\bigg|&
	\innprod{\int_{0}^{\tau_\eta}
	\sumltrunc \Phi^{J,(\eta)}_{{\ell}}(s) \dW(s)}
	{\lambda_{j}^{r} h_{{j}}}
	\bigg|^2 \\
	&=
	\bigg|
	\innprod{\sum_{\nu=1}^{\eta}
	\bigg(
	\sumltrunc
	(\phi^{J,(\eta)}_{{\ell}})_{\nu-1}
	\bigg)
	\big(
	W(\omega,\tau_{\nu})-W(\omega,\tau_{\nu-1})
	\big)
	}
	{\lambda_{j}^{r} h_{{j}}}
	\bigg|^2 \\
	&=
	\bigg|
	\innprod{\sum_{\nu=1}^{\eta}
	\bigg(
	\sumltrunc
	\mathfrak{R}(s_{\nu,\ell},\tau_{\eta};A)
	())_{\nu-1}
	\bigg)
	\big(
	W(\omega,\tau_{\nu})-W(\omega,\tau_{\nu-1})
	\big)
	}
	{\lambda_{j}^{r} h_{{j}}}
	\bigg|^2
	\end{align}
	\end{comment}
	It follows that
	\begin{align*}
	\mathbb{E}\bigg[
	\sumjtrunc
	\bigg|
	\bigg\langle{\int_{0}^{\tau_\eta}
		\sumltrunc \Phi^{J,(\eta)}_{{\ell}}(s) \dW(s)}
	&,
	{\lambda_{j}^{r} h_{{j}}}\bigg\rangle
	\bigg|^2
	\bigg]
	\le
	\mathbb{E}
	\bigg[
	\bigg\|
	\int_{0}^{\tau_\eta}
	\sumltrunc \Phi^{J,(\eta)}_{{\ell}}(s) \dW(s)
	\bigg\|_{  D(A^{\iota}) }^{{2}}
	\bigg]\\
	&{=
		\mathbb{E}
		\bigg[
		\int_{0}^{\tau_\eta}
		\bigg\|
		\sumltrunc \Phi^{J,(\eta)}_{{\ell}}(s)
		\bigg\|_{ \HS(H_0,  D(A^{\iota}) ) }^{2}
		\ds
		\bigg]}
	\\
	&{= 
		\mathbb{E}
		\bigg[
		\sum_{\nu=1}^{\eta}
		\bigg\|
		\sumltrunc (\phi^{J,(\eta)}_{{\ell}})_{\nu-1}
		\bigg\|_{ \HS(H_0,  D(A^{\iota}) ) }^{2}
		(\tau_{\nu} - \tau_{\nu-1})
		\bigg]<\infty,}
	\end{align*}
	{where in the first equality It\^{o}'s isometry, and in the last inequality the  condition~\eqref{eq:assump Burkholder} is used}. Thus, the statement follows.
\end{proof}

We need the following estimate for the process $(\phi^{J,(\eta)}_{{\ell}})_{\nu-1}$ as in~\eqref{eq:ell st t_i,ell = tau} and 
\eqref{eq:ell st t_i,ell exceeds tau} in terms of the Hilbert--Schmidt norm.
\begin{lemma}\label{lem:bd phi indep eta}
	Suppose that Assumption~\ref{assump:cond on B} is satisfied. 
	Fix an arbitrary integer $\eta\in\{1,\dotsc,N\}$. 
	Then, for any $\nu\in\{0,\dotsc,\eta \}$, we have
	\begin{align*}
	&\bigg\|
	\sumltrunc (\phi^{J,(\eta)}_{{\ell}})_{\nu-1}
	\bigg\|_{ \HS(H_0,  D(A^{\iota}) ) }
	=
	\bigg\|
	{\sum_{\ell\in\Xi_{\nu}}} %\sum_{m\in \Lambda_{\ell}}
	(\phi^{J,(\eta)}_{{\ell}})_{\nu-1}
	\bigg\|_{ \HS(H_0,  D(A^{\iota}) ) }
	\\
	&\le
	\bigg(
	{\sum_{\ell\in\Xi_{\nu}}} %\sum_{m\in \Lambda_{\ell}}
	\sumjtrunc
	\lambda_j^{2\iota}|\mathfrak{R}_j(s_{\nu,\ell},\tau_{\eta})|^2
	\Big|
	\innprod{
		B(
		s_{\nu,\ell},\what{X}^{J,L}(s_{\nu,\ell})
		)
		\sqrt{ q_\ell}h_{{\ell}}
	}{
		h_{{j}}
	}
	\Big|^2
	\bigg)^{\frac12},
	\end{align*}
	where $\Xi_{\nu}$ is defined by~\eqref{eq:def Xi_nu}.
\end{lemma}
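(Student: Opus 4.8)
The plan is to expand the Hilbert--Schmidt norm against the orthonormal basis of $H_0$ consisting of the normalised eigenfunctions of $Q$, and to exploit that the coordinate cut-off $\mathscr{P}_{{\ell}}$ isolates a single basis direction. First I would record that $\{\sqrt{q_\ell}\,h_{{\ell}} : q_\ell>0\}$ is an orthonormal basis of $H_0=Q^{1/2}(H)$: since $Qh_{{\ell}}=q_\ell h_{{\ell}}$ we have $Q^{1/2}h_{{\ell}}=\sqrt{q_\ell}\,h_{{\ell}}$, whence $\innprod{\sqrt{q_\ell}h_{{\ell}}}{\sqrt{q_{\ell'}}h_{{\ell'}}}_0=\innprod{h_{{\ell}}}{h_{{\ell'}}}=\delta_{\ell\ell'}$, and completeness follows from that of $\{h_{{\ell}}\}$ in $H$ (indices with $q_\ell=0$ play no role, as the corresponding summands of $\Phi^{J,(\eta)}_{{\ell}}$ vanish). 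The first, trivial, equality in the statement then follows at once from~\eqref{eq:ell st t_i,ell exceeds tau}: $(\phi^{J,(\eta)}_{{\ell}})_{\nu-1}=0_{H_0\to H}$ whenever $\ell\notin\Xi_{\nu}$, so $\sumltrunc(\phi^{J,(\eta)}_{{\ell}})_{\nu-1}=\sum_{\ell\in\Xi_{\nu}}(\phi^{J,(\eta)}_{{\ell}})_{\nu-1}$.

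Next I would evaluate this operator on a basis vector $\sqrt{q_{\ell'}}h_{{\ell'}}$. Restricted to $H_0$, the map $\mathscr{P}_{{\ell}}$ is the orthogonal projection onto $\spn\{\sqrt{q_\ell}h_{{\ell}}\}$, that is, $\mathscr{P}_{{\ell}}(\sqrt{q_{\ell'}}h_{{\ell'}})=\delta_{\ell\ell'}\sqrt{q_\ell}h_{{\ell}}$; hence only the $\ell=\ell'$ summand survives, and for $\ell'\in\Xi_{\nu}$ one obtains $\mathscr{P}_{J}\,R(s_{\nu,\ell'},\tau_{\eta};A)\,B(s_{\nu,\ell'},\what{X}^{J,L}(s_{\nu,\ell'}))(\sqrt{q_{\ell'}}h_{{\ell'}})$, whereas the value is $0$ for $\ell'\notin\Xi_{\nu}$. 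Thus the operators $(\phi^{J,(\eta)}_{{\ell}})_{\nu-1}$, $\ell\in\Xi_{\nu}$, are supported on pairwise orthogonal one-dimensional subspaces of $H_0$, and expanding $\|\cdot\|^2_{\HS(H_0,D(A^{\iota}))}$ over the basis above gives
\[
\Bigl\|\sum_{\ell\in\Xi_{\nu}}(\phi^{J,(\eta)}_{{\ell}})_{\nu-1}\Bigr\|^2_{\HS(H_0,D(A^{\iota}))}
=\sum_{\ell'\in\Xi_{\nu}}\Bigl\|\mathscr{P}_{J}\,R(s_{\nu,\ell'},\tau_{\eta};A)\,B\bigl(s_{\nu,\ell'},\what{X}^{J,L}(s_{\nu,\ell'})\bigr)\bigl(\sqrt{q_{\ell'}}h_{{\ell'}}\bigr)\Bigr\|^2_{D(A^{\iota})}.
\]

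It remains to evaluate the $D(A^{\iota})$-norm of each summand, which I would do by diagonalisation. By construction $R(s_{\nu,\ell'},\tau_{\eta};A)h_{{j}}=\mathfrak{R}_{j}(s_{\nu,\ell'},\tau_{\eta})h_{{j}}$, while $\mathscr{P}_{J}$ truncates to $j\le J$; writing $v:=B(s_{\nu,\ell'},\what{X}^{J,L}(s_{\nu,\ell'}))(\sqrt{q_{\ell'}}h_{{\ell'}})$, which lies in $D(A^{\iota})$ by Assumption~\ref{assump:cond on B} since $\what{X}^{J,L}(s_{\nu,\ell'})$ is a finite spectral sum, one gets $\mathscr{P}_{J}R(s_{\nu,\ell'},\tau_{\eta};A)v=\sum_{j=1}^{J}\mathfrak{R}_{j}(s_{\nu,\ell'},\tau_{\eta})\innprod{v}{h_{{j}}}h_{{j}}$ and hence $\|\mathscr{P}_{J}R(s_{\nu,\ell'},\tau_{\eta};A)v\|^2_{D(A^{\iota})}=\sum_{j=1}^{J}\lambda_{j}^{2\iota}|\mathfrak{R}_{j}(s_{\nu,\ell'},\tau_{\eta})|^2|\innprod{v}{h_{{j}}}|^2$. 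Substituting this into the displayed identity and summing over $\ell'\in\Xi_{\nu}$ produces exactly the quantity on the right-hand side of the asserted inequality (in fact with equality), which finishes the proof. No step here is genuinely delicate; the single point warranting care is identifying $\{\sqrt{q_\ell}h_{{\ell}}\}$ as the appropriate orthonormal basis of $H_0$ and noting that $\mathscr{P}_{{\ell}}$ isolates one basis direction — this is what lets the Hilbert--Schmidt norms add in Pythagorean fashion rather than only through the triangle inequality.
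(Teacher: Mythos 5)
Your proposal is correct and follows essentially the same route as the paper's proof: expand the Hilbert--Schmidt norm over the orthonormal basis $\{\sqrt{q_\ell}\,h_{\ell}\}$ of $H_0$, use that $\mathscr{P}_{\ell}$ isolates a single basis direction so the squared norms add over $\ell\in\Xi_{\nu}$, and then diagonalise $\mathscr{P}_J R(s_{\nu,\ell},\tau_{\eta};A)$ in the eigenbasis $\{h_j\}$ to produce the weights $\lambda_j^{2\iota}|\mathfrak{R}_j(s_{\nu,\ell},\tau_{\eta})|^2$. The only cosmetic difference is that you note the final bound holds with equality, whereas the paper records it as an inequality, which is all that is needed.
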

\begin{proof}
	Note that if
	$\ell\not\in\Xi_{\nu}$, then
	$\norm{
		(\phi^{J,(\eta)}_{{\ell}})_{\nu-1}
		\sqrt{ q_\ell}h_{{\ell}}
	}_{  D(A^{\iota}) }^2=0$. 
	Thus, noting that $\mathscr{P}_{{\ell}}h_{{\ell'}}=0$ 
	unless {$\ell=\ell'$}, from the definition of $(\phi^{J,(\eta)}_{{\ell}})_{\nu-1}$ we have
	\begin{align*}
	\bigg\|
	\sumltrunc (\phi^{J,(\eta)}_{{\ell}})_{\nu-1}
	\bigg\|_{ \HS(H_0,  D(A^{\iota}) ) }^2
	%\blue{=
	%\sum_{\ell'=1}^{\infty} %\sum_{m'\in\Lambda_{\ell'}}
	%\bigg\|
	%		\sumltrunc (\phi^{J,(\eta)}_{{\ell}})_{\nu-1}
	%		\sqrt{ q_{\ell'}} h_{{\ell'}}
	%\bigg\|_{  D(A^{\iota}) }^2} \\
	&=
	{\sum_{\ell'=1}^{L}} %\sum_{m'\in\Lambda_{\ell'}}
	\bigg\|
	(\phi^{J,(\eta)}_{{\ell'}})_{\nu-1}
	\sqrt{ q_{\ell'}} h_{{\ell'}}
	\bigg\|_{  D(A^{\iota})  }^2 \\
	&=
	{\sum_{\ell'\in\Xi_{\nu}}} %\sum_{m'\in \Lambda_{\ell'}}
	\bigg\|
	(\phi^{J,(\eta)}_{{\ell'}})_{\nu-1}
	\sqrt{ q_{\ell'}} h_{{\ell'}}
	\bigg\|_{  D(A^{\iota})  }^2.
	\end{align*}
	Fix $\ell\in\Xi_{\nu}$. For any $\eta\in\{1,\dotsc,N\}$ and $\nu\in\{1,\dotsc,\eta \}$ we have 
	\begin{align*}
	&\norm{
		(\phi^{J,(\eta)}_{{\ell}})_{\nu-1}
		\sqrt{ q_\ell}h_{{\ell}}
	}_{  D(A^{\iota}) }^2 \\
	%\norm{(b^{J,(\eta)}_{{\ell}})_{\nu-1}\sqrt{ q_\ell}h_{{\ell}}}_{  D(A^{\iota}) }
	%&
	%	\blue{=
	%	\norm{
	%		\mathscr{P}_J
	%		\mathfrak{R}(s_{\nu,\ell},\tau_{\eta};A)
	%		B(
	%			s_{\nu,\ell},\what{X}^{J,L}(s_{\nu,\ell})
	%		)
	%		\sqrt{ q_\ell}h_{{\ell}}
	%	}_{  D(A^{\iota}) }^2} \\
	&
	=
	\sumjall
	\lambda_j^{2\iota}
	\Big|
	\innprod{
		\mathscr{P}_J
		{R}(s_{\nu,\ell},\tau_{\eta};A)
		B(
		s_{\nu,\ell},\what{X}^{J,L}(s_{\nu,\ell})
		)
		\sqrt{ q_\ell}h_{{\ell}}
	}{
		h_{{j}}
	}
	\Big|^2
	\\
	%&=
	%\sumjtrunc
	%\lambda_j^{2\iota}
	%\Big|
	%	\innprod{
	%		B(
	%			s_{\nu,\ell},\what{X}^{J,L}(s_{\nu,\ell})
	%		)
	%		\sqrt{ q_\ell}h_{{\ell}}
	%	}{
	%		\mathfrak{R}_j(s_{\nu,\ell},\tau_{\eta}) h_{{j}}
	%	}
	%\Big|^2 \\
	&\le
	\sumjtrunc
	\lambda_j^{2\iota}|\mathfrak{R}_j(s_{\nu,\ell},\tau_{\eta})|^2
	\Big|
	\innprod{
		B(
		s_{\nu,\ell},\what{X}^{J,L}(s_{\nu,\ell})
		)
		\sqrt{ q_\ell}h_{{\ell}}
	}{
		h_{{j}}
	}
	\Big|^2.
	\end{align*}
	Hence, the statement follows.
	\begin{comment}
	we can conclude
	\begin{align*}
	\bigg\|
	\sumltrunc (\phi^{J,(\eta)}_{{\ell}})_{\nu-1}
	\bigg\|_{ \HS(H_0,  D(A^{\iota}) ) }^2 \le
	{\sum_{\ell\in\Xi_{\nu}}} %\sum_{m\in \Lambda_{\ell}}
	\sumjtrunc
	\lambda_j^{2\iota}|\mathfrak{R}_j(s_{\nu,\ell},\tau_{\eta})|^2
	\Big|
	\innprod{
	B(
	s_{\nu,\ell},\what{X}^{J,L}(s_{\nu,\ell})
	)
	\sqrt{ q_\ell}h_{{\ell}}
	}{
	h_{{j}}
	}
	\Big|^2,
	\end{align*}
	for all $\eta\in\{1,\dotsc,N\}$, and $\nu\in\{0,\dotsc,\eta \}$. 
	\end{comment}
\end{proof}
%
%------------end	of	proof------------
%
The following lemma is important to show the maximal regularity estimate of the same form as the continuous counterpart~\eqref{eq:maximal reg shown}, studied in \cite[Proposition 6.18]{DaPrato.D_2014_book} and \cite{DaPrato.G_1982_reg_convo}.
\begin{lemma}\label{lem:reslent get 1/lambda}
	For any $j\ge1$, $\ell\ge1$, and $i\in\{1,\dotsc,n_\ell\}$, we have
	\begin{align*}
	\sum_{t_{i,\ell}\le\tau_{\eta}\le\tau_{N}}
	|\mathfrak{R}_j(t_{i-1,\ell},\tau_{\eta})|^2
	(\tau_{\eta} - \tau_{\eta-1})\le
	\frac{2}
	{\lambda_j},
	\end{align*}
	where $\mathfrak{R}_j(\cdot,\cdot)$ is defined by~\eqref{def:Gamma}.
\end{lemma}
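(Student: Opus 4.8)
The plan is to turn the sum into a telescoping sum by exploiting the multiplicative form of $\mathfrak{R}_j$; the bound is the discrete counterpart of the elementary estimate $\int_0^{T}\e^{-2\lambda_j s}\,\mathrm{d}s\le\frac{1}{2\lambda_j}$. Write $t_{i-1,\ell}=\tau_{\eta_0}$ and $t_{i,\ell}=\tau_{\eta_1}$, where $\eta_0<\eta_1$ are the indices of these two consecutive level-$\ell$ nodes within the global partition $\{\tau_0,\dots,\tau_N\}$ (legitimate since $\{t_{0,\ell},\dots,t_{n_\ell,\ell}\}\subset\{\tau_0,\dots,\tau_N\}$ by construction, and $\eta_0<\eta_1$ forces $\eta_1-1\ge\eta_0$). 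Abbreviate $a_\nu:=\tau_\nu-\tau_{\nu-1}>0$, and for $\eta\in\{\eta_0,\dots,N\}$ put $P_\eta:=\prod_{\nu=\eta_0+1}^{\eta}(1+\lambda_j a_\nu)^{-2}$ with the empty-product convention $P_{\eta_0}=1$. By the definition~\eqref{def:Gamma} we then have $|\mathfrak{R}_j(t_{i-1,\ell},\tau_\eta)|^2=P_\eta$, so the quantity to be estimated is exactly $\sum_{\eta=\eta_1}^{N}a_\eta P_\eta$.

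Next I would record the elementary identity $P_{\eta-1}-P_\eta=P_\eta\big((1+\lambda_j a_\eta)^2-1\big)=P_\eta\,\lambda_j a_\eta(2+\lambda_j a_\eta)$, which rearranges (using $a_\eta>0$, hence $2+\lambda_j a_\eta\ge2$) to $a_\eta P_\eta=\dfrac{P_{\eta-1}-P_\eta}{\lambda_j(2+\lambda_j a_\eta)}\le\dfrac{P_{\eta-1}-P_\eta}{2\lambda_j}$. Summing over $\eta$ from $\eta_1$ to $N$, the right-hand side telescopes to $\frac{1}{2\lambda_j}\big(P_{\eta_1-1}-P_N\big)\le\frac{1}{2\lambda_j}P_{\eta_1-1}$. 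Finally, every factor $(1+\lambda_j a_\nu)^{-2}$ lies in $(0,1]$ and $\eta_1-1\ge\eta_0$, so $P_{\eta_1-1}\le P_{\eta_0}=1$; this gives $\sum_{\eta=\eta_1}^{N}a_\eta P_\eta\le\frac{1}{2\lambda_j}\le\frac{2}{\lambda_j}$.

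There is no genuine obstacle here: the argument is a couple of lines once the telescoping structure is spotted. The only care needed is bookkeeping — matching the level-$\ell$ grid to the global partition, checking the index inequality $\eta_1-1\ge\eta_0$ so that $P_{\eta_1-1}$ (a product over a possibly empty index set) is $\le1$, and using strict positivity of the increments $a_\nu$ so that the division and the sign $P_{\eta-1}-P_\eta\ge0$ are valid. If uniformity with the subsequent estimates makes the constant $2$ convenient, one simply retains the weaker final inequality rather than the sharper $\tfrac{1}{2\lambda_j}$.
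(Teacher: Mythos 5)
Your argument is correct, and it takes a genuinely different route from the paper's proof. The paper majorises the sum by the integral $\int_{t_{i-1,\ell}}^{1}|\mathcal{S}_j(t_{i-1,\ell},s)|^2\,\mathrm{d}s$ of the continuous interpolant \eqref{eq:S_j}, controls the interpolant on each level-$\ell$ subinterval via the inequality $\frac{1}{1+(b-a)}\cdot\frac{1}{1+(c-b)}\le\frac{1}{1+(c-a)}$, sums the resulting geometric series, and concludes with a case distinction according to whether $\lambda_j/n_\ell\ge 1$ or not; that computation uses the uniformity $t_{i,\ell}=i/n_\ell$ of the level-$\ell$ grid and yields the constant $2/\lambda_j$. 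You instead stay entirely on the global partition and sum by parts: with $P_\eta=|\mathfrak{R}_j(t_{i-1,\ell},\tau_\eta)|^2$ and $a_\eta=\tau_\eta-\tau_{\eta-1}$, the exact identity $P_{\eta-1}-P_\eta=\lambda_j a_\eta(2+\lambda_j a_\eta)P_\eta$ gives $a_\eta P_\eta\le(P_{\eta-1}-P_\eta)/(2\lambda_j)$, and telescoping together with $P_{\eta_1-1}\le P_{\eta_0}=1$ finishes the proof. This is shorter, avoids both the interpolation step and the case analysis, and produces the sharper constant $1/(2\lambda_j)$, which is the exact value of the continuous analogue $\int_0^\infty \e^{-2\lambda_j s}\,\mathrm{d}s$ and would propagate into Theorem~\ref{thm:sto conv with X lp reg} and Corollary~\ref{cor:max estim} with the factor $2$ improved to $1/2$. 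Moreover, since your computation never uses uniformity of the level grids, it applies verbatim to arbitrary (not merely quasi-uniform) nodes $t_{i,\ell}$, so the concluding Remark would hold without the factor $c_{\mathrm{disc}}$. What the paper's route offers in exchange is mainly the explicit parallel with the continuous-time semigroup estimate; in terms of constants and generality your telescoping argument is strictly stronger. The one piece of bookkeeping both proofs rely on---that $t_{i-1,\ell},t_{i,\ell}\in\{\tau_0,\dotsc,\tau_N\}$, i.e.\ $\ell\le L$, which is the only case in which the lemma is invoked and in which $n_\ell$ is defined---you have checked correctly, as well as positivity of the increments and the index inequality $\eta_1-1\ge\eta_0$.
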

\begin{proof}
	For $\tau_{\eta_0}\in \{\tau_{0},\dotsc,\tau_{N} \}$ define 
	a continuous interpolation $\mathcal{S}_j(\tau_{\eta_0},\cdot)\from[0,\TMAX]\to\bbR$ of $\mathfrak{R}_j(\tau_{\eta_0},\tau_{\eta})$ by 
	\begin{align}
	\mathcal{S}_j(\tau_{\eta_0},t)
	:=
	\prod_{\nu={\eta_0}+1}^{N} \frac{1}
	{1 + \lambda_{j}(t\wedge \tau_\nu - t\wedge \tau_{\nu-1})},\qquad t\in[0,\TMAX].
	\label{eq:S_j}
	\end{align}
	Then, for $t\in(\tau_{\eta-1},\tau_{\eta}]$, $\eta\in\{1,\dotsc,N\}$, we have
	\begin{align*}
	\mathcal{S}_j(\tau_{\eta_0},t)\indic_{ \{(\tau_{\eta-1},\tau_{\eta}] \} }(t)&=\mathcal{S}_j(\tau_{\eta_0},t)
	%=\prod_{\nu={\eta_0}+1}^{\eta-1}
	%	\frac{1}{1 + \lambda_{j}(\tau_\nu - \tau_{\nu-1})}
	%	\cdot
	%	\frac{1}{1 + \lambda_{j}(t\wedge \tau_\nu -  \tau_{\nu-1})}
	%\\
	%&
	\ge 
	%\prod_{\nu={\eta_0}+1}^{\eta}
	%	\frac{1}{1 + \lambda_{j}(\tau_\nu - \tau_{\nu-1})}
	%=
	\mathfrak{R}_j(\tau_{\eta_0},\tau_{\eta})\indic_{ \{(\tau_{\eta-1},\tau_{\eta}] \} }(t).
	\end{align*}
	Further, for $\ell=1,\dotsc,L$ and $i=1,\dotsc,n_{\ell}$, let 
	%\begin{align}
	%\sigma_{i,\ell}=
	%\min\big\{
	%\{\tau_{1},\dotsc,\tau_{N}\}
	%\cap
	%(t_{i-1,\ell},t_{i,\ell}]
	%\big\}.
	%\end{align}
	\[
	\tau_{\eta^*}:=\tau_{\eta^*(i,\ell)}:=t_{i,\ell}.
	\]
	Then, we have
	\begin{align*}
	\sum_{t_{i,\ell}\le\tau_{\eta}\le\tau_{N}}&
	|\mathfrak{R}_j(t_{i-1,\ell},\tau_{\eta})|^2
	(\tau_{\eta} - \tau_{\eta-1})
	%=
	%	\int_{\tau_{\eta^*-1}}^{\TMAX}
	%		\sum_{\eta=\eta^*(i,\ell)}^{N}
	%		|\mathfrak{R}_j(t_{i-1,\ell},\tau_{\eta})|^2
	%		\indic_{ \{ {(}\tau_{\eta-1},\tau_{\eta}] \} }(s)\ds 
	\\
	&
	\le
	\int_{\tau_{\eta^*-1}}^{\TMAX}
	\sum_{\eta=\eta^*(i,\ell)}^{N}
	|\mathcal{S}_j(t_{i-1,\ell},s)|^2
	\indic_{ \{ (\tau_{\eta-1},\tau_{\eta}] \} }(s)
	\ds \\
	&=\int_{\tau_{\eta^*-1}}^{\TMAX}
	|\mathcal{S}_j(t_{i-1,\ell},s)|^2
	\ds 
	\le
	\int_{t_{i-1,\ell}}^{\TMAX}
	|\mathcal{S}_j(t_{i-1,\ell},s)|^2
	\ds.
	\end{align*}
	For $t\in[t_{\kappa-1,\ell},t_{\kappa-1,\ell}]$ with $\kappa\ge i$,
	the elementary inequality 
	$
	\frac{1}{1+(b-a)}\frac{1}{1+(c-b)}\le\frac{1}{1+(c-a)}
	$ $(0\le a\le b\le c)$ implies
	\begin{align*}
	&\mathcal{S}_j(t_{i-1,\ell},t)
	%\\
	%&\le 
	%\frac{1}{1+\lambda_j(t_{i,\ell}-t_{i-1,\ell})}\cdot
	%\frac{1}{1+\lambda_j(t_{i+1,\ell}-t_{i,\ell})}\cdot\dotsb\cdot
	%\frac{1}{1+\lambda_j(t_{\kappa-1,\ell}-t_{\kappa-2,\ell})}\cdot
	%\frac{1}{1+\lambda_j(t-t_{\kappa-1,\ell})} \\
	%&
	\le 
	\frac{1}{(1+\lambda_j \frac1{n_{\ell}})^{\kappa-i}}\cdot
	\frac{1}{1+\lambda_j(t-t_{\kappa-1,\ell})},
	\end{align*}
	and therefore
	\begin{align*}
	&\int_{t_{i-1,\ell}}^{\TMAX}
	|\mathcal{S}_j(t_{i-1,\ell},s)|^2
	\ds
	=
	\sum_{\kappa=i}^{n_{\ell}}
	\int_{t_{\kappa-1,\ell}}^{t_{\kappa,\ell}}
	|\mathcal{S}_j(t_{i-1,\ell},s)|^2
	\ds \\
	&\le
	\sum_{\kappa=i}^{n_{\ell}}
	\frac{1}{(1+\frac{\lambda_j}{n_{\ell}})^{2\kappa-2i}}
	\int_{t_{\kappa-1,\ell}}^{t_{\kappa,\ell}}
	\frac{1}{(1+\lambda_j(s-t_{\kappa-1,\ell}))^2}
	\ds \\
	%&\blue{=\sum_{\kappa=i}^{n_{\ell}}
	%	\frac{1}{(1+\frac{\lambda_j}{n_{\ell}})^{2\kappa-2i}}
	%	\bigg(
	%	-\frac{1}{\lambda_j  (\lambda_j  (t_{\kappa,\ell}-t_{\kappa-1,\ell})+1)}
	%	+\frac{1}{\lambda_j  (\lambda_j  (t_{\kappa-1,\ell}-t_{\kappa-1,\ell})+1)}
	%	\bigg)} \\
	&=\sum_{\kappa=i}^{n_{\ell}}
	\frac{1}{(1+\frac{\lambda_j}{n_{\ell}})^{2\kappa-2i}}
	\frac{1}{\lambda_j + 1/(t_{\kappa,\ell}-t_{\kappa-1,\ell})}
	\le
	\frac{1}{\lambda_j + {n_{\ell}}}
	\sum_{\kappa=i}^{n_{\ell}}
	\frac{1}{(1+\frac{\lambda_j}{n_{\ell}})^{2\kappa-2i}}.
	\end{align*}
	If $\frac{\lambda_j}{n_{\ell}}\ge1$, then 
	$
	\frac{1}{\lambda_j + {n_{\ell}}}
	\sum_{\kappa=i}^{n_{\ell}}
	\frac{1}{(1+{\lambda_j}/{n_{\ell}})^{2\kappa-2i}}
	%\le
	%\frac{1}{\lambda_j}
	%\sum_{\kappa=0}^{\infty}
	%	\frac{1}{2^{2\kappa}}
	%\le \frac43\frac1{\lambda_j}
	\le \frac{2}{\lambda_j}
	$, 
	and otherwise $(1+\frac{\lambda_j}{n_{\ell}})^2\le4$ and thus
	\begin{align*}
	\frac{1}{\lambda_j + {n_{\ell}}}
	\sum_{\kappa=i}^{n_{\ell}}
	\frac{1}{(1+\frac{\lambda_j}{n_{\ell}})^{2\kappa-2i}}
	&\le
	\frac{1}{{n_{\ell}}}
	\frac1{1-1/{(1+\frac{\lambda_j}{n_{\ell}})^2}}\le
	\frac{4}
	{2\lambda_j
		+\lambda_j^2/n_{\ell}
	}
	\le
	\frac{2}
	{\lambda_j}.
	\end{align*}
	Hence, we have
	$
	\sum_{t_{i,\ell}\le\tau_{\eta}\le\tau_{N}}
	|\mathfrak{R}_j(t_{i-1,\ell},\tau_{\eta})|^2
	(\tau_{\eta} - \tau_{\eta-1})\le
	\frac{2}
	{\lambda_j}$, as claimed.
\end{proof}
%
%------------theorem starts------------
%
We are ready to state our main result.
\begin{theorem}\label{thm:sto conv with X lp reg}
	Suppose Assumption~\ref{assump:cond on B} is satisfied with some $\red{\iota\in[0,1/2]}$. Then, we have
	\begin{align*}
	\sum_{\eta=1}^{N}&
	\mathbb{E}\Big[
	\norm{
		%\what{X}^{J,L}(\tau_{\eta})
		\dstoconv{ B(\cdot, \what{X}^{J,L}(\cdot)) }(\tau_{\eta})
	}_{  D(A^{^{\iota+1/2}}) }^{2}
	\Big]
	(\tau_{\eta} - \tau_{\eta-1}) \\
	&\le
	%2C_2
	{2}
	\mathbb{E}\bigg[
	\sumlall
	\sum_{i=1}^{n_{\ell}}
	\norm{
		\mathscr{P}_J
		B(
		t_{i-1,\ell},\what{X}^{J,L}(t_{i-1,\ell})
		)
		\mathscr{P}_L
		\sqrt{ q_\ell}h_{{\ell}}
	}_{ D(A^{\iota})}^2
	(t_{i,\ell} - t_{i-1,\ell})
	\bigg].
	\end{align*}
	{In particular, $\overline{X}^{J,L}$ defined as in \eqref{eq::ch-maximal-unif} satisfies}
	\begin{align*}
	\sum_{i=1}^{N}
	\mathbb{E}\Big[&
	\norm{
		%\what{X}^{J,L}(t_{i})
		\dstoconv{ B(\cdot, {\overline{X}^{J,L}}(\cdot)) }(t_{i})
	}_{  D(A^{^{\iota+1/2}}) }^{2}
	\Big]
	(t_{i} - t_{i-1}) \\
	&\le
	%2C_2
	{2}
	\sum_{i=1}^{N}
	\mathbb{E}\bigg[
	\norm{
		\mathscr{P}_J
		B(
		t_{i-1},{\overline{X}^{J,L}}(t_{i-1})
		)
		\mathscr{P}_L
		%\sqrt{ q_\ell}h_{{\ell}}
	}_{\HS(H_0, D(A^{\iota}))}^2
	\bigg]
	(t_{i} - t_{i-1}).
	\end{align*}
\end{theorem}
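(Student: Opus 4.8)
The plan is to assemble the theorem from the four results proved above: the stochastic‑integral identity of Lemma~\ref{lem:sto int indentity}, the It\^o‑isometry bound of Proposition~\ref{prop:Burkholder}, the pointwise Hilbert--Schmidt bound of Lemma~\ref{lem:bd phi indep eta}, and the geometric‑sum estimate of Lemma~\ref{lem:reslent get 1/lambda}; the only genuinely new ingredient is a change of summation index from the global grid $\{\tau_{\eta}\}$ to the mode‑wise grids $\{t_{i,\ell}\}$. Fix $\eta\in\{1,\dots,N\}$. First I would run the argument of Proposition~\ref{prop:Burkholder} with the exponent $\iota+1/2$ in place of $\iota$. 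This is legitimate even though Assumption~\ref{assump:cond on B} only gives $B(t,u)\in\HS(H_0,D(A^{\iota}))$: the factor $\mathscr{P}_J$ in \eqref{eq:ell st t_i,ell = tau} makes $(\phi^{J,(\eta)}_{{\ell}})_{\nu-1}$ finite‑rank, hence an element of $\HS(H_0,D(A^{s}))$ for every $s\ge0$, and $\dstoconv{ B(\cdot, \what{X}^{J,L}(\cdot)) }(\tau_{\eta})\in\spn\{h_{1},\dots,h_{J}\}\subset D(A^{\iota+1/2})$; the rest (It\^o's isometry for simple integrands, the spectral expansion, the truncation to $j\le J$) is unchanged, and the analogue of Lemma~\ref{lem:bd phi indep eta} holds with $\lambda_j^{2\iota+1}$, $D(A^{\iota+1/2})$ replacing $\lambda_j^{2\iota}$, $D(A^{\iota})$. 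One first checks the integrability hypothesis \eqref{eq:assump Burkholder} at exponent $\iota+1/2$: by that analogue it is dominated by a finite sum over $\ell\le L$, $j\le J$, $\nu\le\eta$ of terms $\lambda_j^{2\iota+1}|\mathfrak{R}_j(\cdot,\cdot)|^2\,\mathbb{E}\,|\innprod{B(s_{\nu,\ell},\what{X}^{J,L}(s_{\nu,\ell}))\sqrt{q_{\ell}}h_{\ell}}{h_{j}}|^2$, which is finite by \eqref{eq:lip-H}, \eqref{eq:lin-H} and the fact that each $\what{X}^{J,L}(\tau_{\eta})$ has a finite second moment (a routine induction on \eqref{eq:def-Xjk-rec}). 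Combining the two results then gives, for each fixed $\eta$,
\begin{align*}
\mathbb{E}\Big[\big\|\dstoconv{ B(\cdot, \what{X}^{J,L}(\cdot)) }(\tau_{\eta})\big\|_{D(A^{\iota+1/2})}^{2}\Big]
&\le
\mathbb{E}\sum_{\nu=1}^{\eta}(\tau_{\nu}-\tau_{\nu-1})\sum_{\ell\in\Xi_{\nu}}\sum_{j=1}^{J}\lambda_{j}^{2\iota+1}|\mathfrak{R}_{j}(s_{\nu,\ell},\tau_{\eta})|^{2}\\
&\qquad\times\big|\innprod{B(s_{\nu,\ell},\what{X}^{J,L}(s_{\nu,\ell}))\sqrt{ q_{\ell}}h_{{\ell}}}{h_{{j}}}\big|^{2}.
\end{align*}

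The central step is the re‑indexing of the $\nu$‑sum. Because $\{t_{0,\ell},\dots,t_{n_{\ell},\ell}\}\subseteq\{\tau_{0},\dots,\tau_{N}\}$, for every $\ell$ and $i\in\{1,\dots,n_{\ell}\}$ the $\tau$‑intervals contained in $(t_{i-1,\ell},t_{i,\ell}]$ partition it, so their lengths add up to $t_{i,\ell}-t_{i-1,\ell}$; and whenever $s_{\nu,\ell}=t_{i-1,\ell}$ (equivalently $(\tau_{\nu-1},\tau_{\nu}]\subseteq(t_{i-1,\ell},t_{i,\ell}]$) one has $\ell\in\Xi_{\nu}$ if and only if $t_{i,\ell}\le\tau_{\eta}$, since $t_{i,\ell}$ is the smallest $\ell$‑grid point that is $\ge\tau_{\nu}$ and is itself a $\tau$. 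Hence the right‑hand side above equals $\mathbb{E}\sum_{\ell=1}^{L}\sum_{i:\,t_{i,\ell}\le\tau_{\eta}}(t_{i,\ell}-t_{i-1,\ell})\sum_{j=1}^{J}\lambda_{j}^{2\iota+1}|\mathfrak{R}_{j}(t_{i-1,\ell},\tau_{\eta})|^{2}\,|\innprod{B(t_{i-1,\ell},\what{X}^{J,L}(t_{i-1,\ell}))\sqrt{q_{\ell}}h_{\ell}}{h_{j}}|^{2}$. Multiplying by $(\tau_{\eta}-\tau_{\eta-1})$, summing over $\eta=1,\dots,N$, and exchanging the non‑negative sums by Tonelli brings $\sum_{\eta:\,t_{i,\ell}\le\tau_{\eta}\le\tau_{N}}|\mathfrak{R}_{j}(t_{i-1,\ell},\tau_{\eta})|^{2}(\tau_{\eta}-\tau_{\eta-1})$ to the inside, and Lemma~\ref{lem:reslent get 1/lambda} bounds this by $2/\lambda_j$, turning $\lambda_j^{2\iota+1}$ into $2\lambda_j^{2\iota}$. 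Recognising $\sum_{j=1}^{J}\lambda_j^{2\iota}|\innprod{g}{h_j}|^{2}=\norm{\mathscr{P}_J g}_{D(A^{\iota})}^{2}$ with $g=B(t_{i-1,\ell},\what{X}^{J,L}(t_{i-1,\ell}))\sqrt{q_{\ell}}h_{\ell}=B(t_{i-1,\ell},\what{X}^{J,L}(t_{i-1,\ell}))\mathscr{P}_L\sqrt{q_{\ell}}h_{\ell}$ (valid since $\ell\le L$) gives the first displayed estimate, the extension of the outer sum to $\sum_{\ell\ge1}$ being harmless since $\mathscr{P}_L$ annihilates $h_{\ell}$ for $\ell>L$. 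For the ``in particular'' part, $\overline{X}^{J,L}$ is the case $n_{\ell}=N$ for all $\ell$, so $t_{i,\ell}=i/N$ for every $\ell$ and the grids all coincide with $\{\tau_{\eta}\}$; substituting this and using that $\{\sqrt{q_{\ell}}h_{\ell}\}_{\ell\ge1}$ is an orthonormal basis of $H_0$ to collapse the $\ell$‑sum into $\norm{\mathscr{P}_J B(t_{i-1},\overline{X}^{J,L}(t_{i-1}))\mathscr{P}_L}_{\HS(H_0,D(A^{\iota}))}^{2}$ yields the stated inequality.

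I expect the bookkeeping of the re‑indexing to be the main obstacle: one must keep track of which mode‑wise increment $(t_{i-1,\ell},t_{i,\ell}]$ is active on a given $\tau$‑interval and up to which $\tau_{\eta}$ it still contributes to $\what{X}^{J,L}_{j}(\tau_{\eta})$ (this is precisely what the index sets $\calK_{\eta}$, $\Xi_{\nu}$ and the times $s_{\nu,\ell}$ encode), and verify that the weights $(\tau_{\nu}-\tau_{\nu-1})$ reassemble into $(t_{i,\ell}-t_{i-1,\ell})$. By contrast, the actual regularity gain --- the extra factor $\lambda_j^{-1}$ relating $D(A^{\iota+1/2})$ on the left to $D(A^{\iota})$ on the right --- is entirely contained in the already‑established Lemma~\ref{lem:reslent get 1/lambda}, which is also the only place where the non‑uniformity of the partition genuinely enters.
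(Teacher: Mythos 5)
Your proposal is correct and follows essentially the same route as the paper: apply Lemma~\ref{lem:sto int indentity}, Proposition~\ref{prop:Burkholder} and Lemma~\ref{lem:bd phi indep eta} with the exponent $\iota+1/2$, re-index the $(\nu,\ell\in\Xi_{\nu})$-sum into the mode-wise sum over $(\ell,i)$ with weights $(t_{i,\ell}-t_{i-1,\ell})$, and then use Tonelli together with Lemma~\ref{lem:reslent get 1/lambda} to trade $\lambda_j^{2\iota+1}$ for $2\lambda_j^{2\iota}$ before collapsing the norms (and the $\ell$-sum in the uniform case). The only difference is cosmetic: you verify the integrability hypothesis~\eqref{eq:assump Burkholder} directly via second moments of the discrete scheme, while the paper deduces it from the same weighted-sum computation and Assumption~\ref{assump:cond on B}.
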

\begin{proof}
	We first show that for $\eta=1,\dots,N$, we have
	\begin{align}
	\mathbb{E}\bigg[
	\sum_{\nu=1}^{\eta}
	\bigg\|
	\sumltrunc (\phi^{J,(\eta)}_{{\ell}})_{\nu-1}
	\bigg\|_{ \HS(H_0,  D(A^{^{\iota+1/2}}) ) }^{2}
	(\tau_{\nu} - \tau_{\nu-1})
	\bigg]<\infty.\label{eq:intm bd}
	\end{align}
	In view of Lemma~\ref{lem:bd phi indep eta}, we have
	\begin{align}
	%&\sum_{\eta=1}^{N}
	%\mathbb{E}\Big[
	%\norm{ 
	%	\dstoconv{ B(\cdot, \what{X}^{J,L}(\cdot)) }
	%}_{  D(A^{^{\iota+1/2}}) }^{2}
	%\Big]
	%(\tau_{\eta} - \tau_{\eta-1}) \\
	&%C_2
	\sum_{\eta=1}^{N}
	\mathbb{E}\bigg[
	\sum_{\nu=1}^{\eta}
	\bigg\|
	\sumltrunc (\phi^{J,(\eta)}_{{\ell}})_{\nu-1}
	\bigg\|_{ \HS(H_0,  D(A^{^{\iota+1/2}}) ) }^{2}
	(\tau_{\nu} - \tau_{\nu-1})
	\bigg]
	(\tau_{\eta} - \tau_{\eta-1})\notag\\
	&\le
	%C_2
	\sum_{\eta=1}^{N}
	\mathbb{E}\bigg[
	\sumjtrunc 
	\sum_{\nu=1}^{\eta} \sum_{\ell\in\Xi_{\nu}} %\sum_{m\in \Lambda_{\ell}}
	\lambda_j^{2\iota+1}|\mathfrak{R}_j(s_{\nu,\ell},\tau_{\eta})|^2
	\notag\\[-1.1pt]
	&\qquad\qquad\times\Big|
	\innprod{
		B(
		s_{\nu,\ell},\what{X}^{J,L}(s_{\nu,\ell})
		)
		\sqrt{ q_\ell}h_{{\ell}}
	}{
		h_{{j}}
	}
	\Big|^2
	(\tau_{\nu} - \tau_{\nu-1})
	\bigg]
	(\tau_{\eta} - \tau_{\eta-1}) \notag\\[-1.1pt]
	&= %C_2
	\mathbb{E}\bigg[
	\sumjtrunc 
	\sumltrunc
	\sum_{\eta=1}^{N}
	\sum_{\tau_{1}\le t_{i,\ell}\le \tau_{\eta}}
	\lambda_j^{2\iota+1}|\mathfrak{R}_j(t_{i-1,\ell},\tau_{\eta})|^2
	\notag \\[-1.1pt]
	&\qquad
	\times\Big|
	\innprod{
		B(
		t_{i-1,\ell},\what{X}^{J,L}(t_{i-1,\ell})
		)
		\sqrt{ q_\ell}h_{{\ell}}
	}{
		h_{{j}}
	}
	\Big|^2
	(t_{i,\ell}-t_{i-1,\ell})
	(\tau_{\eta} - \tau_{\eta-1})
	\bigg].
	\label{eq:t_il sum}
	\end{align}
	Since 
	$
	\bigcup_{\eta=1}^N \bigcup_{\tau_{1}\le t_{i,\ell}\le \tau_{\eta}}
	\{ \tau_{\eta},t_{i,\ell}\}
	=\bigcup_{i=1}^{n_{\ell}} \bigcup_{t_{i,\ell}\le \tau_{\eta}\le \tau_{N}}
	\{ \tau_{\eta},t_{i,\ell}\}
	$, 
	the right hand side of~\eqref{eq:t_il sum} can be rewritten as
	\begin{align}
	&\mathbb{E}\bigg[
	\sumjtrunc 
	\sumltrunc
	\sum_{i=1}^{n_{\ell}}
	\sum_{t_{i,\ell}\le\tau_{\eta}\le\tau_{N}}
	\lambda_j^{2\iota+1}|\mathfrak{R}_j(t_{i-1,\ell},\tau_{\eta})|^2
	\notag \\
	&\qquad
	\times\Big|
	\innprod{
		B(
		t_{i-1,\ell},\what{X}^{J,L}(t_{i-1,\ell})
		)
		\sqrt{ q_\ell}h_{{\ell}}
	}{
		h_{{j}}
	}
	\Big|^2
	(t_{i,\ell} - t_{i-1,\ell})
	(\tau_{\eta} - \tau_{\eta-1})
	\bigg]\notag \\
	&=\mathbb{E}\bigg[
	\sumjtrunc 
	\sumltrunc
	\sum_{i=1}^{n_{\ell}}
	\lambda_j^{2\iota+1}
	\Big|
	\innprod{
		B(
		t_{i-1,\ell},\what{X}^{J,L}(t_{i-1,\ell})
		)
		\sqrt{ q_\ell}h_{{\ell}}
	}{
		h_{{j}}
	}
	\Big|^2\notag \\
	&\quad \times(t_{i,\ell} - t_{i-1,\ell})
	\sum_{t_{i,\ell}\le\tau_{\eta}\le\tau_{N}}
	|\mathfrak{R}_j(t_{i-1,\ell},\tau_{\eta})|^2
	(\tau_{\eta} - \tau_{\eta-1})
	\bigg].\label{eq:earn lambda from resolvent}
	\end{align}
	From Lemma~\ref{lem:reslent get 1/lambda}, \eqref{eq:t_il sum} and \eqref{eq:earn lambda from resolvent}, due to Assumption~\ref{assump:cond on B} we have~\eqref{eq:intm bd}.
	
	From~\eqref{eq:intm bd}, we note that Proposition~\ref{prop:Burkholder} implies
	\begin{align*}
	\sum_{\eta=1}^{N}
	\mathbb{E}\Big[&
	\norm{ 
		%\what{X}^{J,L}(\tau_{\eta})
		\dstoconv{ B(\cdot, \what{X}^{J,L}(\cdot)) }(\tau_{\eta})
	}_{  D(A^{^{\iota+1/2}}) }^{2}
	\Big]
	(\tau_{\eta} - \tau_{\eta-1})\nonumber \\
	&\le
	%C_2
	{\sum_{\eta=1}^{N}}
	\mathbb{E}\bigg[
	\sum_{\nu=1}^{\eta}
	\bigg\|
	\sumltrunc (\phi^{J,(\eta)}_{{\ell}})_{\nu-1}
	\bigg\|_{ \HS(H_0,  D(A^{^{\iota+1/2}}) ) }^{2}
	(\tau_{\nu} - \tau_{\nu-1})
	\bigg]
	(\tau_{\eta} - \tau_{\eta-1}).
	%\label{eq:HS bd w/ eta dependence}
	\end{align*}
	Therefore, again from Lemma~\ref{lem:reslent get 1/lambda} together with~\eqref{eq:t_il sum} and~\eqref{eq:earn lambda from resolvent} 
	we obtain 
	\begin{align*}
	\sum_{\eta=1}^{N}
	\mathbb{E}\Big[&
	\norm{ 
		\dstoconv{ B(\cdot, \what{X}^{J,L}(\cdot)) }(\tau_{\eta})
	}_{  D(A^{^{\iota+1/2}}) }^{2}
	\Big]
	(\tau_{\eta} - \tau_{\eta-1})\\
	&\le
	%2C_2
	{2}
	\mathbb{E}\bigg[
	\sumjtrunc 
	\sumltrunc
	\sum_{i=1}^{n_{\ell}}
	\lambda_j^{2\iota}
	\Big|
	\innprod{
		B(
		t_{i-1,\ell},\what{X}^{J,L}(t_{i-1,\ell})
		)
		\sqrt{ q_\ell}h_{{\ell}}
	}{
		h_{{j}}
	}
	\Big|^2
	%				\notag \\		&\quad \times
	(t_{i,\ell} - t_{i-1,\ell})
	\bigg] \\
	%&\blue{=
	%2C_2
	%\mathbb{E}\bigg[
	%			\sumltrunc
	%			\sum_{i=1}^{n_{\ell}}
	%				\sumjall
	%				\lambda_j^{2\iota}
	%				\Big|
	%					\innprod{
	%						B(
	%								t_{i-1,\ell},\what{X}^{J,L}(t_{i-1,\ell})
	%							)
	%							\sqrt{ q_\ell}h_{{\ell}}
	%					}{
	%						\mathscr{P}_Jh_{{j}}
	%					}
	%				\Big|^2}\notag \\
	%		&\quad \blue{\times(t_{i,\ell} - t_{i-1,\ell})
	%\bigg]} \\
	&=
	%2C_2
	{2}
	\mathbb{E}\bigg[
	\sumlall
	\sum_{i=1}^{n_{\ell}}
	\norm{
		\mathscr{P}_J
		B(
		t_{i-1,\ell},\what{X}^{J,L}(t_{i-1,\ell})
		)
		\mathscr{P}_L
		\sqrt{ q_\ell}h_{{\ell}}
	}_{ D(A^{\iota})}^2
	(t_{i,\ell} - t_{i-1,\ell})
	\bigg].
	\end{align*}
	When $n_{\ell}=N$ for all $\ell\in\{1,\dotsc, L\}$, we have 
	$t_{i,\ell} - t_{i-1,\ell}=t_{i} - t_{i-1}$ ($i=1,\dotsc,N$). Thus, repeating the same argument as above completes the proof.
\end{proof}
%--------------------------
%------end of proof	-------
%--------------------------
As a consequence of the previous result, given a suitable regularity of the initial condition, the approximate solution has the spatial regularity ``one-half smoother''---the same as the continuous counterpart \cite{DaPrato.D_2014_book}---than the range of the operator $B(t,x)$. 
\begin{corollary}\label{cor:max estim}
	Suppose Assumption~\ref{assump:cond on B} is satisfied with some $\red{\iota\in[0,1/2]}$, and let $\xi\in D(A^{\iota})$. Then, we have
	\begin{align*}
	\bigg(&
	\sum_{\eta=1}^{N}
	\mathbb{E}\big[	\|
	\what{X}^{J,L} (\tau_{\eta})
	\|_{ D(A^{^{\iota+1/2}})}^2
	\big]
	(\tau_{\eta} - \tau_{\eta-1})
	\bigg)^{\frac12}\preceq
	%T^{\frac1p}
	\big\|
	\mathscr{P}_{J} \xi 
	\big\|_{  D(A^{\iota}) } \\
	& +
	% T^{\frac12}
	\bigg(
	\mathbb{E}\bigg[
	\sumlall
	\sum_{i=1}^{n_{\ell}}
	\norm{
		\mathscr{P}_J
		B(
		t_{i-1,\ell},\what{X}^{J,L}(t_{i-1,\ell})
		)
		\mathscr{P}_L
		\sqrt{ q_\ell}h_{{\ell}}
	}_{ D(A^{\iota})}^2
	(t_{i,\ell} - t_{i-1,\ell})
	\bigg]
	\bigg)^{\frac12}.
	\end{align*}
	{In particular, $\overline{X}^{J,L}$ defined as in \eqref{eq::ch-maximal-unif} satisfies}
	\begin{align*}
	&{\bigg(
		\sum_{i=1}^{N}
		\bbE\big[\!	\|
		\overline{X}^{J,L} (t_{i})
		\|_{D(A^{\iota+1/2})}^2
		\big]
		(t_{i} - t_{i-1})
		\bigg)^{\frac12}}\\
	&
	{
		\preceq
		\big\|
		\mathscr{P}_J \xi
		\big\|_{ D(A^{\iota}) } }
	%\\[-5pt]&
	{+
		\bigg(
		\sum_{i=1}^{N}
		\bbE\bigg[
		\norm{
			\mathscr{P}_J
			B(t_{i-1},
			\overline{X}^{J,L}(t_{i-1})
			)
			\mathscr{P}_L 
		}_{\HS(H_0,D(A^{\iota}))}^2
		\bigg]
		(t_{i} - t_{i-1})\bigg)^{\frac12}.}
	\end{align*}
\end{corollary}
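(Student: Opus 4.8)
The plan is to deduce the corollary from the exact orthogonal splitting~\eqref{eq:decomp norm}. Taking $r=\iota+\tfrac12$ there, multiplying by $(\tau_\eta-\tau_{\eta-1})$ and summing over $\eta\in\{1,\dots,N\}$, the left-hand side $\sum_{\eta=1}^N\mathbb{E}[\|\what{X}^{J,L}(\tau_\eta)\|_{D(A^{\iota+1/2})}^2](\tau_\eta-\tau_{\eta-1})$ breaks into a deterministic ``initial-value'' sum and the ``stochastic-convolution'' sum $\sum_{\eta=1}^N\mathbb{E}[\,\|\dstoconv{B(\cdot,\what{X}^{J,L}(\cdot))}(\tau_\eta)\|_{D(A^{\iota+1/2})}^2\,](\tau_\eta-\tau_{\eta-1})$. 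The latter is controlled directly by Theorem~\ref{thm:sto conv with X lp reg}, which bounds it by $2\,\mathbb{E}\big[\sumlall\sum_{i=1}^{n_\ell}\|\mathscr{P}_J B(t_{i-1,\ell},\what{X}^{J,L}(t_{i-1,\ell}))\mathscr{P}_L\sqrt{q_\ell}h_{{\ell}}\|_{D(A^{\iota})}^2(t_{i,\ell}-t_{i-1,\ell})\big]$, so no further work is needed for that piece.

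For the initial-value sum $\sum_{\eta=1}^N\sum_{j=1}^J\lambda_j^{2\iota+1}|\mathfrak{R}_j(\tau_0,\tau_\eta)\innprod{\xi}{h_{{j}}}|^2(\tau_\eta-\tau_{\eta-1})$, I would pull $\lambda_j^{2\iota}\innprod{\xi}{h_{{j}}}^2$ out of the $\eta$-sum and show that $\lambda_j\sum_{\eta=1}^N|\mathfrak{R}_j(\tau_0,\tau_\eta)|^2(\tau_\eta-\tau_{\eta-1})\le 2$ uniformly in $j$. Since $t_{0,\ell}=\tau_0$ for every $\ell$ and $\tau_1=t_{1,\ell}$ for any $\ell$ with $n_\ell=\max_{\ell'}n_{\ell'}$, the sum $\sum_{\eta=1}^N$ coincides with $\sum_{t_{1,\ell}\le\tau_\eta\le\tau_N}$, so this is precisely Lemma~\ref{lem:reslent get 1/lambda} applied with $i=1$; equivalently one reruns its interpolation argument on the union grid, dominating the sum by $\int_0^{1}|\mathcal{S}_j(\tau_0,s)|^2\,ds$, using $\tfrac{1}{1+(b-a)}\tfrac{1}{1+(c-b)}\le\tfrac{1}{1+(c-a)}$ to get $\mathcal{S}_j(\tau_0,s)\le(1+\lambda_j s)^{-1}$, and integrating. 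Summing over $j\le J$ then bounds the initial-value sum by $2\|\mathscr{P}_J\xi\|_{D(A^{\iota})}^2$.

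Combining the two estimates gives $\sum_{\eta=1}^N\mathbb{E}[\|\what{X}^{J,L}(\tau_\eta)\|_{D(A^{\iota+1/2})}^2](\tau_\eta-\tau_{\eta-1})\le 2\|\mathscr{P}_J\xi\|_{D(A^{\iota})}^2+2\,\mathbb{E}\big[\sumlall\sum_{i=1}^{n_\ell}\|\mathscr{P}_J B(t_{i-1,\ell},\what{X}^{J,L}(t_{i-1,\ell}))\mathscr{P}_L\sqrt{q_\ell}h_{{\ell}}\|_{D(A^{\iota})}^2(t_{i,\ell}-t_{i-1,\ell})\big]$, and taking square roots together with $(a+b)^{1/2}\le a^{1/2}+b^{1/2}$ yields the first displayed estimate, the factor $\sqrt2$ being absorbed into $\preceq$. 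For the ``in particular'' statement I would specialise $n_\ell=N$ for all $\ell\in\{1,\dots,L\}$, so that $\what{X}^{J,L}=\overline{X}^{J,L}$, $\tau_\eta=t_\eta$ and $t_{i,\ell}=t_i$; one can then either combine~\eqref{eq:decomp norm} on the uniform grid with the initial-value bound and the ``in particular'' part of Theorem~\ref{thm:sto conv with X lp reg}, or rewrite $\sum_{\ell=1}^{L}\|\mathscr{P}_J B(t_{i-1},\overline{X}^{J,L}(t_{i-1}))\mathscr{P}_L\sqrt{q_\ell}h_{{\ell}}\|_{D(A^{\iota})}^2=\|\mathscr{P}_J B(t_{i-1},\overline{X}^{J,L}(t_{i-1}))\mathscr{P}_L\|_{\HS(H_0,D(A^{\iota}))}^2$, using that $\{\sqrt{q_\ell}h_{{\ell}}\}_{\ell\ge1}$ is an orthonormal basis of $H_0$ and $\mathscr{P}_L\sqrt{q_\ell}h_{{\ell}}=0$ for $\ell>L$.

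I do not anticipate a genuine obstacle here: the analytic substance already sits in Theorem~\ref{thm:sto conv with X lp reg} and Lemma~\ref{lem:reslent get 1/lambda}, and the rest is bookkeeping. The one point that needs a line of care is that the initial-value sum runs over the \emph{non-uniform} union grid $\{\tau_\eta\}$ rather than over one of the level grids $\{t_{i,\ell}\}$; this is settled by the elementary observations $t_{0,\ell}=\tau_0$ and $\tau_1=\min_\ell t_{1,\ell}$, or simply by repeating the interpolation estimate of Lemma~\ref{lem:reslent get 1/lambda} on $\{\tau_\eta\}$ directly.
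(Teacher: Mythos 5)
Your proposal is correct and follows essentially the same route as the paper: it uses the orthogonal decomposition \eqref{eq:decomp norm} with $r=\iota+\tfrac12$, bounds the stochastic-convolution part by Theorem~\ref{thm:sto conv with X lp reg}, bounds the initial-value part by Lemma~\ref{lem:reslent get 1/lambda} (applied from $\tau_0$, giving the factor $2/\lambda_j$ and hence $2\|\mathscr{P}_J\xi\|_{D(A^{\iota})}^2$), and obtains the uniform-grid case by setting $n_{\ell}=N$. Your extra remarks (why the lemma applies on the union grid with $i=1$, and the identification of $\sum_{\ell}\|\cdot\sqrt{q_\ell}h_{\ell}\|_{D(A^{\iota})}^2$ with the Hilbert--Schmidt norm in the uniform case) only make explicit steps the paper leaves implicit.
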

\begin{proof}
	From Lemma~\ref{lem:reslent get 1/lambda} we have
	\begin{align*}
	\sum_{\eta=1}^{N}&
	\mathbb{E}\big[
	\norm{{R}(\tau_0,\tau_\eta;A)\mathscr{P}_{J} \xi  }_{  D(A^{^{\iota+1/2}}) }^2
	\big]
	(\tau_{\eta} - \tau_{\eta-1})\nonumber\\
	&
	=
	\mathbb{E}\big[
	\sumjtrunc
	\lambda_j^{2\iota+1}
	\big|\innprod{\xi}{h_{{j}}}\big|^2
	\sum_{\eta=1}^{N}
	\big|\mathfrak{R}_{j}(\tau_0,\tau_{\eta})\big|^2
	(\tau_{\eta} - \tau_{\eta-1})
	\big]
	\\
	&\le2
	\mathbb{E}\big[
	\sumjtrunc
	\lambda_j^{2\iota}
	\big|\innprod{\xi}{h_{{j}}}\big|^2
	\big].
	\end{align*}
	\begin{comment}
	\bigg(
	\sum_{\eta=1}^{N}
	\mathbb{E}\big[
	\norm{\mathfrak{R}(\tau_0,\tau_\eta;A)\mathscr{P}_{J} \xi  }_{  D(A^{^{\iota+1/2}}) }^p
	\big]
	(\tau_{\eta} - \tau_{\eta-1})
	\bigg)^{\frac1p}
	\le
	\bigg(
	\sum_{\eta=1}^{N}
	\big\|
	\mathscr{P}_{J} \xi 
	\big\|_{  D(A^{^{\iota+1/2}}) }^p
	(\tau_{\eta} - \tau_{\eta-1})
	\bigg)^{\frac1p}
	=
	%T^{\frac1p}
	\big\|
	\mathscr{P}_{J} \xi 
	\big\|_{  D(A^{^{\iota+1/2}}) }^p.
	\end{comment}
	Then, from~\eqref{eq:decomp norm} and Theorem~\ref{thm:sto conv with X lp reg} the {first} statement follows. 
	{Letting $n_{\ell}=N$ for $\ell=1,\dots,L$ establishes the second statement.}
\end{proof}
\begin{remark}
	%\remark{
	The results in this section can be generalised to non-uniform grids on each level. 
	Let
	$
	0<t_{1,\ell}<\cdots<t_{n_{\ell},\ell}=\TMAX
	$ 
	be the temporal grids that satisfies the following: 
	Letting $\delta^{\max}_{\ell}:=\max_{i=1,\dotsc,n_{\ell}}\{t_{i,\ell}-t_{i-1,\ell}\}$,
	$\delta^{\min}_{\ell}:=\min_{i=1,\dotsc,n_{\ell}}\{t_{i,\ell}-t_{i-1,\ell}\}$, we have a constant 
	$c_{\mathrm{disc}}\ge1$ such that 
	$
	\delta^{\max}_{\ell}/\delta^{\min}_{\ell}\le c_{\mathrm{disc}}
	$ holds. 
	Then, the statement of Lemma~\ref{lem:reslent get 1/lambda} can be replaced by
	\begin{align*}
	\sum_{t_{i,\ell}\le\tau_{\eta}\le\tau_{N}}
	|\mathfrak{R}_j(t_{i-1,\ell},\tau_{\eta})|^2
	(\tau_{\eta} - \tau_{\eta-1})\le
	\frac{2c_{\mathrm{disc}}}
	{\lambda_j},
	\end{align*}
	and that of Theorem~\ref{thm:sto conv with X lp reg} by
	\begin{align}
	&\sum_{\eta=1}^{N}
	\mathbb{E}\Big[
	\norm{
		%\what{X}^{J,L}(\tau_{\eta})
		\dstoconv{ B(\cdot, \what{X}^{J,L}(\cdot)) }(\tau_{\eta})
	}_{  D(A^{^{\iota+1/2}}) }^{2}
	\Big]
	(\tau_{\eta} - \tau_{\eta-1}) \nonumber\\
	&\le
	%2c_{\mathrm{disc}}C_2
	{2c_{\mathrm{disc}}}
	\mathbb{E}\bigg[
	\sumlall
	\sum_{i=1}^{n_{\ell}}
	\norm{
		\mathscr{P}_J
		B(
		t_{i-1,\ell},\what{X}^{J,L}(t_{i-1,\ell})
		)
		\mathscr{P}_L
		\sqrt{ q_\ell}h_{{\ell}}
	}_{ D(A^{\iota})}^2
	(t_{i,\ell} - t_{i-1,\ell})
	\bigg].\nonumber
	\end{align}
	%}
\end{remark}
\section{Conclusion}\label{maximal-sec:conclusion}
In this paper, we considered an implicit Euler--Maruyama scheme for a class of stochastic partial differential equations with a non-uniform time discretisation. 
For this scheme, we showed that a discrete analogue of the maximal $L^2$-regularity holds, which has the same form as the maximal regularity of the original problem.
\bibliographystyle{amsplain}
%\bibliography{/Users/kazashiy/Dropbox/aResearch/mybib}

\begin{thebibliography}{10}
	
	\bibitem{Agarwal.R_etal_2014}
	R.~P. Agarwal, C.~Cuevas, and C.~Lizama, {{Regularity of
			Difference Equations on Banach Spaces}}, Springer International Publishing,
	Cham, 2014. \MR{3244339}
	
	\bibitem{Amann.H_1995_book_vol1}
	H.~Amann, {{Linear and Quasilinear Parabolic Problems. Volume I:
			Abstract Linear Theory}}, Monographs in Mathematics, vol.~89,
	Birkh{\"{a}}user, Boston, 1995. \MR{1345385}
	
	\bibitem{Anh.V.V_etal_2018_fractional}
	V.~V. Anh, P.~Broadbridge, A.~Olenko, and Y.~G. Wang, {{On
			approximation for fractional stochastic partial differential equations on the
			sphere}}, Stoch.\ Environ.\ Res.\ Risk Assess. (2018). \doi{10.1007/s00477-018-1517-1}
	
	\bibitem{Ashyralyev.A_1994_book}
	A.~Ashyralyev and P.~E. Sobolevskii, {{Well-Posedness of Parabolic
			Difference Equations}}, Birkh{\"{a}}user, Basel, 1994. \MR{1299329}
	
	\bibitem{Auscher.P_etal_2014_conical}
	P.~Auscher, J.~{{van Neerven}}, and P.~Portal, {{Conical
			stochastic maximal $L^p$-regularity for $1\le p<\infty$}}, Math. Ann. (2014). \MR{3231019}
	
	\bibitem{Baldi.P_etal_2007_homogeneous}
	P.~Baldi, D.~Marinucci, and V.~S. Varadarajan, {{On the characterization
			of isotropic Gaussian fields on homogeneous spaces of compact groups}},
	Electron. Commun. Probab. \textbf{12} (2007), 291--302. \MR{2342708}
	
	\bibitem{Blunck.S_2001_maximal}
	S.~Blunck, {{Maximal regularity of discrete and continuous time
			evolution equations}}, Stud. Math. \textbf{146} (2001), 157--176. \MR{1853519}
	
	\bibitem{DaPrato.G_1982_reg_convo}
	G.\ {Da Prato}, {{Regularity results of a convolution stochastic integral
			and applications to parabolic stochastic equations in a Hilbert space}},
	Conf. del Semin. di Mat. dell'Universit{\`{a}} di Bari (1982), no.\ 182, 17. \MR{0679566}
	
	\bibitem{DaPrato.D_2014_book}
	G.\ {Da Prato} and J.~Zabczyk, {{Stochastic Equations in Infinite
			Dimensions}}, 2nd ed., vol. 152, Cambridge University Press, 2014. \MR{3236753}
	
	\bibitem{Gawarecki.L_Mandrekar_2011_SPDE_book}
	L.~Gawarecki and V.~Mandrekar, {{Stochastic Differential
			Equations in Infinite Dimensions}}, Probability and Its Applications,
	Springer Berlin Heidelberg, Berlin, Heidelberg, 2011. \MR{2560625}
	
	\bibitem{Jentzen.A_Kloeden_2011_book}
	A.~Jentzen and P.~E. Kloeden, {{Taylor Approximations for
			Stochastic Partial Differential Equations}}, SIAM, Philadelphia, 2011. \MR{2856611}
	
	\bibitem{Kato.T_book_1995_reprint}
	T.~Kato, {{Perturbation Theory for Linear Operators}}, Classics in
	Mathematics, Springer-Verlag, Berlin, 1995. \MR{1335452}
	
	\bibitem{Kemmochi.T_2016_disc_maximal}
	T.~Kemmochi, {{Discrete maximal regularity for abstract {C}auchy
			problems}}, Stud. Math. \textbf{234} (2016), 241--263. \MR{3549514}
	
	\bibitem{Kemmochi.T_Saito_2016_arXiv}
	T.~Kemmochi and N.~Saito, {{Discrete maximal regularity and the
			finite element method for parabolic equations}}, Numer. Math. \textbf{138} (2018), 905--937. \MR{3778340}
	
	\bibitem{Kovacs.B_etal_2015_arxiv}
	B.~Kov{\'{a}}cs, B.~Li, and C.~Lubich, {{A-stable time discretizations preserve maximal parabolic regularity}}, SIAM J. Numer. Anal. 54 (2016), 3600--3624. \MR{3582825}
	
	\bibitem{Kruse.R_2014_lecture_note}
	R.~Kruse, {{Strong and Weak Approximation of Semilinear Stochastic
			Evolution Equations}}, Lecture Notes in Mathematics, vol. 2093, Springer
	International Publishing, 2014. \MR{3154916}
	
	\bibitem{Kruse.R_Larsson_2012_optimal_reg}
	R.~Kruse and S.~Larsson, {{Optimal regularity for semilinear
			stochastic partial differential equations with multiplicative noise}},
	Electron. J. Probab. \textbf{17} (2012). \MR{2968672}
	
	\bibitem{Kunstmann.P_Weis_2004_lecture_note}
	P.~C. Kunstmann and L.~Weis, {{Maximal $L\sb p$-regularity for
			parabolic equations, Fourier multiplier theorems and $H\sp \infty$-functional
			calculus}}, Functional Analytic Methods for Evolution Equations, Lecture
	Notes in Math., vol. 1855, Springer, Berlin, 2004, 65--311. \MR{2108959}
	
	\bibitem{Lang.A_Schwab_2015}
	A.\ Lang and C.\ Schwab, {{Isotropic Gaussian random fields on
			the sphere: Regularity, fast simulation and stochastic partial differential
			equations}}, Ann. Appl. Probab. \textbf{25} (2015), 3047--3094. \MR{3404631}
	
	\bibitem{Leykekhman.D_etal_2015_arxiv}
	D.~Leykekhman and B.~Vexler, {{Discrete maximal parabolic regularity for Galerkin finite element methods}},  Numer. Math. \textbf{135} (2017), 923--952. \MR{3606467}
	
	\bibitem{Lunardi.A_1995_book}
	A.~Lunardi, {{Analytic Semigroups and Optimal Regularity in
			Parabolic Problems}}, Modern Birkh{\"{a}}user Classics,
	Birkh{\"{a}}user/Springer, Basel, 1995. \MR{1329547}
	
	\bibitem{Marinucci.D_2011_book}
	D.~Marinucci and G.~Peccati, {{Random Fields on the Sphere}},
	London Mathematical Society Lecture Note Series, vol. 389, Cambridge
	University Press, Cambridge, 2011. \MR{2840154}
	
	\bibitem{Muller-Gronbach.T_Ritter_2007_BIT_multiplicative}
	T.~M{\"{u}}ller-Gronbach and K.~Ritter, {{An implicit Euler scheme
			with non-uniform time discretization for heat equations with multiplicative
			noise}}, BIT Numer. Math. \textbf{47} (2007), 393--418. \MR{2334046}
	
	\bibitem{Muller-Gronbach.T_Ritter_2007_FoCM_lower_bd}
	T.~M{\"{u}}ller-Gronbach and K.~Ritter, {{Lower bounds and nonuniform
			time discretization for approximation of stochastic heat equations}}, Found.
	Comput. Math. \textbf{7} (2007), 135--181. \MR{2324415}
	
	\bibitem{Sell.G_You_2013_book}
	G.~R. Sell and Y.~You, {{Dynamics of Evolutionary Equations}},
	vol. 143, Springer-Verlag, New York, 2013. \MR{1873467}
	
	\bibitem{vanNeerven.J_etal_2012_sto_meaximal_evol}
	J.~{{van Neerven}}, M.~Veraar, and L.~Weis, {{Maximal
			$L^p$-regularity for stochastic evolution equations}}, SIAM J. Math. Anal.
	\textbf{44} (2012), 1372--1414. \MR{2982717}
	
	\bibitem{vanNeerven.J_2012_stomaximal}
	J.~{{van Neerven}}, M.~Veraar, and L.~Weis, {{Stochastic maximal $L^p$-regularity}}, Ann. Probab. \textbf{40}
	(2012), 788--812. \MR{2952092}
	
	\bibitem{vanNeerven.J_2015_Rbdd}
	J.~{{van Neerven}}, M.~Veraar, and L.~Weis, {{On the $R$-boundedness of stochastic convolution operators}},
	Positivity. \textbf{19} (2015), 355--384. \MR{3357999}
	
	\bibitem{Yosida.K_book_1995reprint}
	K.~Yosida, {{Functional Analysis}}, Classics in Mathematics,
	Springer-Verlag, Berlin, 1995. \MR{1336382}
	
\end{thebibliography}
%\end{document}
\providecommand{\bysame}{\leavevmode\hbox to3em{\hrulefill}\thinspace}
\providecommand{\MR}{\relax\ifhmode\unskip\space\fi MR }
% \MRhref is called by the amsart/book/proc definition of \MR.
\providecommand{\MRhref}[2]{%
	\href{http://www.ams.org/mathscinet-getitem?mr=#1}{#2}
}
\providecommand{\href}[2]{#2}

\Addresses
%\ackno{	I would like to thank an anonymous reviewer for 	their helpful comments}.
\end{document}